\documentclass[reqno]{amsart}
\UseRawInputEncoding
\usepackage{amsmath,,amsfonts,amssymb,amsthm,amscd,latexsym,cite}
\usepackage{mathrsfs}
\usepackage{color,comment}
\newtheorem{thm}{Theorem}[section]
\newtheorem{theorem}[thm]{Theorem}

\newtheorem{lemma}[thm]{Lemma}
\newtheorem{cor}[thm]{Corollary}

\newtheorem{proposition}[thm]{Proposition}
\newtheorem{definition}[thm]{Definition}
\newtheorem{remark}[thm]{Remark}
\newtheorem{rem}[thm]{Remark}

\newcommand{\cA}{{\mathcal A}}

\newcommand{\cE}{{\mathcal E}}

\newcommand{\cH}{{\mathcal H}}

\newcommand{\cM}{{\mathcal M}}

\newcommand{\cP}{{\mathcal P}}

\newcommand{\cZ}{{\mathcal Z}}

 \usepackage{color}
 \newcommand{\norm}[1]{\left\lVert#1\right\rVert}
\usepackage{hyperref}					
\hypersetup{colorlinks,
	linkcolor=blue,%
	citecolor=blue}

 \usepackage{float}
\begin{document}
\title[Norms of generalized derivations]{Norms of generalized derivations with values in symmetric spaces}


\author[J. Huang]{J. Huang}
\address[Jinghao Huang]{Institute for Advanced Study in Mathematics, Harbin Institute of Technology, Harbin 150001, China \emph{E-mail~:} {\tt jinghao.huang@hit.edu.cn} }

\author{M. Pliev}
\address[Marat Pliev]{Southern Mathematical Institute of the Russian Academy of Sciences, Vladikavkaz, 362025 Russia,
North-Caucasus Center for Mathematical
Research of the Vladikavkaz Scientific Centre of the Russian Academy
of Sciences, Vladikavkaz, 362027 Russia  and North-Ossetian State  University, Vladikavkaz, 362025 Russia}
\email{plimarat@yandex.ru}

\author[F. Sukochev]{F. Sukochev}
\address[Fedor Sukochev]{School of Mathematics and Statistics, University of New South Wales, Kensington, 2052, NSW, Australia  \emph{E-mail~:} {\tt f.sukochev@unsw.edu.au} }

\author[R. Xu]{R. Xu}
\address[Ran Xu]{Institute for Advanced Study in Mathematics, Harbin Institute of Technology, Harbin 150001, China \emph{E-mail~:} {\tt xxurann@stu.hit.edu.cn} }

\thanks{\footnotesize
J. Huang was supported by the NNSF of China (No. 12031004, 12301160 and  12471134).
F. Sukochev was supported by  the Australian Research Council (DP230100434). 
M. Pliev  was supported by the Ministry
of Science and Education of the Russian Federation (number of the agreement is 075--02--2026--738)
}

\subjclass[2010]{46L57; 46L10; 46L52; 46E30. \hfill 
}

\keywords{generalized derivation; von Neumann algebra;  non-commutative symmetric space.
}

\begin{abstract} Let $B(\cH)$ be the $*$-algebra of all bounded linear operators on a Hilbert space $\cH$.  
A classical result due to Stamplfi shows that, for any $a\in B(\cH)$,  the norm of the inner derivation $\delta_a=[a,\cdot]$ on $B(\cH)$ is given by $\norm{\delta_a}_{B(\cH)\to B(\cH)}=2\inf\{  \norm{a-c{\bf 1}}_{B(\cH)}:c\in \mathbb{C}\}$.
In the present paper, we provide a formula for the norm of a generalized derivation implemented by self-adjoint operators from an ideal in $B(\cH)$, which partially answers a question posed by Fialkow and Loebl in the 1980s.
We also consider this question in a    more general setting  of  a  symmetrically normed (not necessarily complete or separable) space  affiliated with a properly infinite semifinite von Neumann algebra.  
\end{abstract}
\maketitle

\section{Introduction}

The study of derivations on $C^*$-/$W^*$-algebras has been
at the heart of the study of operator algebras  since the inception
of the field, see \cite{HS24} and references therein. 
Let \( A \) be an associative algebra over the field \( \mathbb{C} \) of complex numbers and let \( M \) be an \( A \)-bimodule. 
A linear map \( \delta \) from \( A \) into \( M \) is called a derivation if
\[
\delta(xy) = \delta(x)y + x\delta(y) \quad \text{for all } x, y \in A.
\]
It is clear that each element \( a \in M \) defines a derivation \( \delta_a \) from \( A \) into \( M \) by
\[
\delta_a(x) = [a, x] = ax - xa, \quad x \in A.
\]
Such derivations \( \delta_a \) are called  inner derivations.
It is well-known that
all derivations on 
a $W^*$-algebra (in particular, 
$B(\cH)$)
  are inner \cite[Theorem 4.1.6]{Sakai}.
Stampfli \cite{Stam} obtained the   following
remarkable formula for the norm of an inner derivation $\delta_a:=[a, \cdot ]$ on $B(\cH)$:
\begin{align*}
\left\| \delta_a \right\|_{B(\cH)\to B(\cH)} =2\inf \left\{
 \left\| a- c{\bf 1}
\right\|_{B(\cH)}:c \in \mathbb{C}   
\right\},\,\,  a\in B(\cH).
\end{align*}
Later on, a lot of effort has been spent  to extend this
result to inner derivations  on general von Neumann algebras and
other operator spaces (see \cite{Gajendragadkar,Zsido,Mc,Ch,KLR} and references therein). 
It is natural to ask \emph{whether there are  Stampfli type formulas for the norm of an inner derivation implemented by an operator from a noncommutative symmetric space}. 
An analogue of Stampfli's formula for a skew-adjoint
derivation $\delta_a\colon \cM\to \cM_*$ from a  von
Neumann algebra $\cM$ into its predual $\cM_*$ was established in  \cite[Theorems~1.1 and 3.1]{BHS}, i.e., 
\begin{align}\label{disS2}
\norm{\delta_a}_{\cM\to \cM_*} =2 \inf \{\norm{a-z}_{\cM_*}:z\in Z(LS(\cM))\},
\end{align}
where $Z(LS(\cM))$ stands for the center of the space of all locally measurable operators affiliated with $\cM$. 
In contrast to Stampfli's result, 
the predual version only holds for self-adjoint operators, see \cite{BBS} for counterexamples in the setting of normal (non-self-adjoint) operators.

Let $a,b$ be bounded operators on a Hilbert space $\cH$. 
The generalized derivation $\delta_{a,b}$ is defined by $$\delta_{a,b}(x)=ax-xb, \quad x\in B(\cH).$$
This notion was introduced by Rosenblum\cite{Rosenblum}. 
Assume that  
 $J$ is a proper two-sided ideal of $B(\cH)$,   $a,b\in J$ and $\delta_{ a,b}$ is an  operator from $B(\cH)$ into $J$. 
In \cite[p.577]{FL}, Fialkow and Loebl asked  
\begin{quote}
\emph{what is the norm $\norm{\delta_{ a,b}}_{B(\cH)\to J} $ of $\delta_{ a,b}$?}
\end{quote}
Here, $J$ is a norm (proper) ideal in $B(\cH)$ in the sense of Schatten\cite[p.57]{Schatten}.
In general, it is difficult to calculate this norm while 
 an estimate for the case when $a=b$ is given in \cite{FL, F79}:
$$
2\cdot  \inf_{\lambda\in \mathbb{C} } \norm{a-\lambda}_{J} \ge \norm{\delta_a }_{B(\cH)\to J} \ge {\rm diam}(W(a)),$$
where $W(a)$ denotes the numerical range of $a$. 
For general  operators $a,b$ from   norm ideals $J$ in $B(\cH)$, there exists no precise formula for $\norm{\delta_{a,b}}_{B(\cH)\to J}$ in literature, and  formula
\eqref{disS2} fails  for  general norm  ideals $J\subset K(\cH)$ (here, $K(\cH)$ stands for the set of all compact operators on $\cH$). 
The main purpose of the present paper is to answer the question by Fialkow and Loebl in the setting of self-adjoint operators in a proper operator ideal in $B(\cH)$.

We briefly recall the connection between ideals of compact operators and symmetric sequence spaces, which is known as the Calkin correspondence. 
Recall that 
an ideal $ J$
 of the algebra $B(\cH)$  is said to be symmetrically normed if
$\{\mu_k(x)\}_{k=1}^\infty\leq\{\mu_k(y)\}_{k=1}^\infty$
and $y\in J$
 implies that $\left\|x\right\|_ J  \leq \left\|y\right\|_{J}$
\cite{DPS,GK}, 
where $\mu(x):=\{ \mu_1(x),\mu_2(x),\cdots \}$ stands for the decreasing sequence of generalized singular values of $x$.
The Calkin correspondence shows that there exists a bijection between (symmetrically normed) two-sided ideals $C_E$ of compact operators and  symmetric sequence spaces $E$ in $c_0$ (see e.g. \cite[Theorem 1.2.3]{LSZ}):
$$x\in C_E \Leftrightarrow   \mu(x)\in E. $$ 
For the original Calkin correspondence (under some restrictions, e.g., separability, Fatou norm\footnote{Recall that the norm on a symmetrically normed operator ideal $J$ 
is called a Fatou
norm if the unit ball of $J$ is closed with respect to strong (or, equivalently, weak) operator
convergence.} imposed on  the symmetrically normed ideal $C_E$), we refer to \cite{GK,Simon,Schatten}. 
 Note that, Fialkow and Loebl\cite{FL} considered 
general (complete)
 symmetrically normed ideals of compact operators which didn't possess any additional properties like Fatou norm or seprability of the ideal.
In order to treat Fialkow and Loebl's question in the setting of general two-sided Banach ideals in $B(\cH)$, 
we need the  bijective correspondence
between general symmetric operator spaces/operator ideals and general symmetric function/sequence spaces which was established in  \cite{Kalton_S}
through 
the notion of the so-called uniform submajorization (see also detailed discussion of the latter notion in \cite{LSZ}).


\begin{theorem}\label{special}Let $E \subset c_0$   be a symmetrically normed sequence  space. For any self-adjoint operators $a,b \in C_E$ (the operator ideal   corresponding to  $E$), we have 
\begin{align*}
\norm{\delta_{a,b}}_{B(\cH)\to C_E}=
 \norm{  \left( \mu(a_+ )+\mu(  b_- )\right) \oplus \left( \mu(b_+ )+\mu(  a_-   )\right) }_{E   }.
\end{align*}
 Here, $\mu(x)\oplus \mu(y)$ is a sequence in $\ell_\infty \oplus \ell_\infty$ which can be identified as an element in $\ell_\infty$.
\end{theorem}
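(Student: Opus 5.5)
We prove the two inequalities separately. Throughout, $\norm{\cdot}_{C_E}$ depends only on $\mu(\cdot)$ and is monotone under (uniform) submajorization in the sense of \cite{Kalton_S}.

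\emph{Lower bound.} Let $p_\pm$ be the spectral projections of $a$ onto $(0,\infty)$ and $(-\infty,0)$, and $q_\pm$ those of $b$. Consider $x$ of the block form
$$x = u - v,$$
where $u$ is a partial isometry from $q_-\cH$ into $p_+\cH$ and $v$ is a partial isometry from $q_+\cH$ into $p_-\cH$. Then $\norm{x}_\infty\le 1$, because $u$ and $v$ have orthogonal initial and final spaces.

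Each $u$ is chosen to intertwine eigenbases: it sends the $k$-th eigenvector of $b_-$ (eigenvalue $-\mu_k(b_-)$) to the $k$-th eigenvector of $a_+$. Then $au - ub$ maps this vector to $(\mu_k(a_+)+\mu_k(b_-))$ times the corresponding vector, so $\mu(au-ub)=\mu(a_+)+\mu(b_-)$. Similarly $-(av - vb)$ has singular values $\mu(b_+)+\mu(a_-)$.

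The pieces $au-ub$ and $av-vb$ act between mutually orthogonal subspaces, namely $q_-\to p_+$ and $q_+\to p_-$. Hence
$$\mu(\delta_{a,b}(x)) = \mu\bigl((\mu(a_+)+\mu(b_-))\oplus(\mu(b_+)+\mu(a_-))\bigr),$$
which gives $\ge$.

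Two technical points have to be handled here. First, kernels and multiplicities may be infinite, and $a,b$ are compact, so eigenbases exist on the supports of $a_\pm, b_\pm$. Second, if the supports have different dimensions we simply pad with zeros; the corresponding singular values are then already $0$ on one side, so the sum is still attained.

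\emph{Upper bound.} This is the main step. For any contraction $x$ we must show
$$\mu(ax-xb)\prec\prec_{\rm unif}\,\bigl(\mu(a_+)+\mu(b_-)\bigr)\oplus\bigl(\mu(b_+)+\mu(a_-)\bigr),$$
or at least ordinary Hardy–Littlewood submajorization $\prec\prec$ if the Kalton–Sukochev correspondence allows it. Write
$$ax-xb = (a_+x + x b_-) - (a_- x + x b_+).$$
It is natural to pass to the $2\times2$ dilation on $\cH\oplus\cH$,
$$T=\begin{pmatrix}0& ax-xb\\ (ax-xb)^*&0\end{pmatrix}.$$
This $T$ is a compression-type expression in the self-adjoint operator $A=a\oplus b$ and the self-adjoint contraction $X=\begin{pmatrix}0&x\\x^*&0\end{pmatrix}$. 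Indeed, $ax-xb$ is the off-diagonal block of $A X - X(b\oplus a)$; more conveniently, consider
$$\begin{pmatrix}a&0\\0&-b\end{pmatrix}X + X\begin{pmatrix}-b&0\\0&a\end{pmatrix}.$$

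The plan is to use the standard fact that for positive $c,d$ and a contraction $y$, $\mu(cy+yd)\prec\prec\mu(c)+\mu(d)$. This holds since the Ky Fan $k$-norms satisfy the triangle inequality and $\norm{cy}_{(k)}\le\norm{c}_{(k)}$. However, the plain triangle inequality on the decomposition $(a_+x+xb_-)-(a_-x+xb_+)$ only yields $\mu(a_+)+\mu(b_-)+\mu(a_-)+\mu(b_+)$ summed pointwise, which is too large. We need the direct-sum structure.

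To get it, we use a Ky Fan argument. For each $k$,
$$\norm{ax-xb}_{(k)}=\sup\bigl|\mathrm{tr}\bigl((ax-xb)w\bigr)\bigr|$$
over rank-$k$ contractions $w$, and
$$\mathrm{tr}\bigl((ax-xb)w\bigr)=\mathrm{tr}(a\,xw)-\mathrm{tr}(b\,wx).$$
With $s=xw$ and $t=wx$, we use that $|\mathrm{tr}(as)-\mathrm{tr}(bt)|$ is maximized by aligning $s$ with $\mathrm{sgn}(a)$ and $t$ with $-\mathrm{sgn}(b)$. This reduces the problem to the scalar extremal problem
$$\sup_{\alpha,\beta}\Bigl(\sum_i \alpha_i\lambda_i(a)-\sum_j\beta_j\lambda_j(b)\Bigr),$$
subject to the constraint that the $\alpha$ and $\beta$ coming from $xw$ and $wx$ have rank $\le k$, norm $\le1$, and equal trace. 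That optimum is the sum of the $k$ largest entries of the direct sum.

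Making this rigorous is the expected obstacle; we would first try to prove it via the dilation and a Lidskii-type eigenvalue inequality for $A X - X B'$. Once submajorization is known for all $k$, uniform submajorization is obtained from the Kalton–Sukochev characterization of arbitrary symmetric norms, which completes the proof.
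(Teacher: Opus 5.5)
Your proposal has genuine gaps in both halves: the upper bound is not proved, and the lower-bound construction fails when the kernels of $a$ and $b$ are small.

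\emph{Upper bound.} The Ky Fan step does not go through.

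\begin{itemize}
\item If $s=xw$ and $t=wx$ are aligned with $\mathrm{sgn}(a)$ and $-\mathrm{sgn}(b)$ independently, you only recover $\norm{a}_{(k)}+\norm{b}_{(k)}$. This is already too large: for $a=b\ge 0$ and $k=1$ it gives $2\mu_1(a)$ instead of $\mu_1(a)$.
\item The whole difficulty is the coupling between $xw$ and $wx$. You have not shown that ``rank $\le k$, norm $\le 1$, equal trace'' captures this coupling, nor solved the resulting extremal problem.
\item Your final sentence is not a valid step. Kalton--Sukochev show that every symmetric norm is monotone under $\vartriangleleft$. They do not show that $\prec\prec$ upgrades to $\vartriangleleft$, and in general it does not: for $x_i=2^{-i}\chi_{[0,1)}$, as in the remark after Proposition~\ref{directsum sub}, $\oplus_i x_i\prec\prec\sum_i x_i$ holds but not uniformly. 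So a Ky Fan bound would cover only strongly symmetric $E$, not an arbitrary symmetrically normed $E$ as in the statement.
\end{itemize}

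The missing idea is to reduce to unitaries first, by Russo--Dye \cite{RD}. For unitary $u$ one has $\mu(au-ub)=\mu(a-ubu^*)$, and $a-ubu^*=(a_++ub_-u^*)-(a_-+ub_+u^*)$ is a difference of two positive operators. The pointwise inequality $\mu(y-z)\le\mu(y\oplus z)$ for $y,z\ge 0$ \cite[Thm.~3.4(1)]{HanS} then produces the direct-sum structure at once. Combine it with $\mu(a_++ub_-u^*)\vartriangleleft\mu(a_+)+\mu(b_-)$ \cite[Lemma~3.4.4]{LSZ}, the analogous relation for the other summand, and Proposition~\ref{direct sum}. This gives $(1-\varepsilon)\mu(\delta_{a,b}(u))\vartriangleleft(\mu(a_+)+\mu(b_-))\oplus(\mu(a_-)+\mu(b_+))$, which controls every symmetric norm; this is Theorem~\ref{upper estimate}.

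\emph{Lower bound.} Exact eigenvector matching is correct only when there is room to pad.

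\begin{itemize}
\item If, say, $\operatorname{rank}(a_+)>\operatorname{rank}(b_-)$, the extra initial vectors of $u$ must lie in $\ker b$, otherwise $b$ contributes to $ax-xb$. Likewise the padding for $v$ must lie in $\ker a$, and all these vectors must be mutually orthogonal. These kernels may be small or trivial.
\item Concretely, take $a=b\ge 0$ injective of infinite rank. Then $p_-=q_-=0$, so your $x$ is $0$, whereas the theorem asserts $\norm{\delta_a}=\norm{\mu(a)\oplus\mu(a)}_E$.
\item No exact construction can exist in that case. For $E=\ell_1$ one has $\norm{ax-xa}_1=\mathrm{tr}(a(xw-wx))$ for a suitable contraction $w$. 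Since $a$ is injective, equality with $2\,\mathrm{tr}(a)$ would force $xw-wx=2\cdot{\bf 1}$, which is impossible.
\end{itemize}

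What is needed is an interlacing in both directions. Large eigenvectors of $b_+$ are sent to where $a$ is negligible, and vectors where $b$ is negligible are sent to large eigenvectors of $a_+$. The matching must be two-sided, because any $b$-reducing part of $s(b_+)$ on which $x$ vanishes is annihilated by $ax-xb$.

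Such a matching reaches the target only up to factors like $\frac{N-1}{N}$ and a dilation $\sigma_{N/(N+1)}$. Passing to the norm therefore again requires $\vartriangleleft$, via \eqref{dilationuniform} and \cite[Corollary~3.4.3]{LSZ}, rather than pointwise equality.

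The paper attempts this with a sparse subsequence in Step~3 of Lemma~\ref{lemma:approx}. Step~3 as written pairs all of $s(a_+)-e'$ with only a sparse part of $s(b_+)-e$. The rest of $s(b_+)-e$ then lies outside the initial space of $u_N$, so by the observation above its closing identity $\mu(u_N^*au_N-b)=\mu(au_N-u_Nb)$ fails (it would need $u_N^*u_N={\bf 1}$). The two-sided matching in the proof of Theorem~\ref{thm2} is the safer model for repairing your padding step.
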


We obtain Theorem \ref{special} as an immediate corollary of the following more general result, Theorem \ref{main2} below. 
Let 
$a$ and $b$ be  self-adjoint 
 $\tau$-compact operators affiliated with a semifinite factor $\cM$
(if  $\cM=B(\cH)$, then $a$ and $b$ are self-adjoint compact  operators from some proper ideal in  $B(\cH)$). 
 It is worth noting that  derivations
from a semifinite von Neumann algebra $\cM$ into an arbitrary
symmetric space $E(\cM,\tau)$ of $\tau$-measurable operators affiliated with $\cM$ are
necessarily inner  \cite{BCS}.
Consequently, see Proposition ~\ref{gen_inner} below,
 for any generalized derivation $\delta_{a,b}:\cM\to E(\cM,\tau)$, 
there exists $z$ in the center $ Z(S(\cM,\tau))$ of the algebra $S(\cM,\tau)$ of all $\tau$-measurable operators affiliated with $\cM$ such that $a-z,b-z\in E(\cM,\tau)$ and $\delta_{a,b}=\delta_{a-z,b-z}$.
 
\begin{theorem}\label{main2}Let $E(0,\infty)\subset S_0(0,\infty )$ (functions whose  decreasing rearrangements vanish  at infinity) be a symmetrically normed space and let $\cM$ be an infinite factor equipped with a semifinite faithful normal trace $\tau$. For any self-adjoint operators $a,b \in E(\cM,\tau)$ (the noncommutative  space corresponding to  $E(0,\infty)$), we have 
\begin{align*}
\norm{\delta_{a,b}}_{\cM\to E(\cM,\tau)}=
 \norm{  \left( \mu(a_+ )+\mu(  b_- )\right) \oplus \left( \mu(b_+ )+\mu(  a_-   )\right) }_{E (0,\infty)   }.
\end{align*}
Here, $\mu(x)$ stands for the generalized singular value function of a $\tau$-measurable operator $x$ affiliated with $\cM$.
\end{theorem}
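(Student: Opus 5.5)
We prove the two inequalities separately. Write $a=a_+-a_-$, $b=b_+-b_-$, and set
$$f:=\bigl(\mu(a_+)+\mu(b_-)\bigr)\oplus\bigl(\mu(b_+)+\mu(a_-)\bigr).$$

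\emph{Upper bound.} Fix $x\in\cM$ with $\norm{x}\le 1$. Split
$$ax-xb=(a_+x+xb_-)-(a_-x+xb_+).$$
Each piece should be controlled by a sum of singular value functions. The key fact to use is the positive-operator estimate: for $p,q\ge0$ $\tau$-compact and $\norm{x}\le1$, the operator $px+xq$ equals the compression of the block matrix $\begin{pmatrix} p & 0\\ 0& q\end{pmatrix}$ against the contraction $\begin{pmatrix} 1 & x\\ 0&0\end{pmatrix}$ and its counterpart. This gives the submajorization
$$px+xq\prec\prec \mu(p)+\mu(q),$$
using $\mu(p\oplus q)\prec\prec\mu(p)+\mu(q)$ together with Ky Fan type inequalities. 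For the full difference, consider the $2\times2$ dilation
$$ax-xb=\begin{pmatrix}1 & x\end{pmatrix}\begin{pmatrix} a&0\\0&-b\end{pmatrix}\begin{pmatrix}x'\\1\end{pmatrix}.$$
More efficiently, note $ax-xb=\delta_{A}(X)_{12}$ with $A=a\oplus b$ and $X=\begin{pmatrix}0&x\\0&0\end{pmatrix}$. The positive and negative parts of $A$ are $a_+\oplus b_+$ and $a_-\oplus b_-$, and we need
$$\mu(ax-xb)\prec\prec f .$$
I would prove this by a Ky Fan maximum argument: for any projection $e$ of trace $t$, estimate $|\tau(e u(ax-xb))|$ by splitting into the four terms. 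This yields
$$\int_0^t\mu(ax-xb)\le \sup_{s+r=t}\Bigl(\int_0^s(\mu(a_+)+\mu(b_-))+\int_0^r(\mu(b_+)+\mu(a_-))\Bigr)=\int_0^t\mu(f).$$
Since the norm of $E$ need not be Fatou and $E$ need not be separable, monotonicity of the norm under submajorization is not automatic. We therefore invoke the Kalton--Sukochev uniform submajorization machinery: we upgrade to the statement that $ax-xb$ is uniformly Hardy--Littlewood submajorized, which I expect follows from the same block decomposition by writing $ax-xb$ as a sum of a bounded number of contractive compressions of $a_+\oplus b_-$ and $b_+\oplus a_-$. This upgraded bound gives $\norm{ax-xb}_E\le\norm{f}_E$.

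\emph{Lower bound.} This is the main construction. Since $\cM$ is an infinite factor and all operators involved are $\tau$-compact, the supports of $a_\pm$ and $b_\pm$ have infinite-trace complements. First treat the case where the four positive operators are diagonal with finite-trace spectral projections, and then pass to the general case by approximation in the symmetric norm. Choose partial isometries $v$ moving the support of $b_-$ onto that of $a_+$, arranged so that the spectral decompositions align in decreasing order, and similarly $w$ moving the support of $a_-$ onto that of $b_+$. Take $x=v+w^*$, suitably placed so that $\norm{x}\le1$. Then
$$a_+x+xb_-\approx \text{an operator with }\mu=\mu(a_+)+\mu(b_-),$$
living on one block, and $-(a_-x+xb_+)$ gives $\mu(b_+)+\mu(a_-)$ on an orthogonal block. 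The cross terms must vanish. Orthogonality of the ranges requires first conjugating $a$ and $b$ by unitaries, which the factor property permits; equivalently, one uses $\norm{\delta_{a,b}}=\norm{\delta_{uau^*,wbw^*}}$. The result is $\mu(ax-xb)=f$, up to rearrangement.

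The main obstacle is making the cross terms vanish exactly. We have $ax-xb$ restricted to the joint support, and generically $a$ and $b$ overlap. I would try conjugating so that $s(a)$ and $s(b)$ are orthogonal, noting that $\delta_{a,b}$ changes by unitary equivalence only if $a$ and $b$ are conjugated separately. This is true here: $\delta_{uau^*,wbw^*}(x)=u\,\delta_{a,b}(u^*xw)\,w^*$. Hence $\norm{\delta_{a,b}}$ depends only on the unitary orbits of $a$ and $b$, which reduces the problem to the orthogonal-support situation where the explicit $x$ works. The approximation step for non-Fatou $E$ is also delicate. There, instead of taking limits, I would use the fact that $\mu(ax-xb)=f$ holds exactly via a single partial isometry built from the spectral measures when the ranks are matched. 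Generally the kernels are infinite in a factor, so one can always embed, and the equality of singular value functions then gives $\norm{f}_E$ directly.
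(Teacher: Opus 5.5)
\textbf{Lower bound: the reduction rests on a false claim.} $\tau$-compactness in an infinite factor does not force the supports of $a_\pm,b_\pm$ to have infinite-trace complements. For $a=b=\mathrm{diag}(1,\tfrac12,\tfrac14,\dots)\in B(\ell_2)$ one has $s(a)=s(b)=\mathbf{1}$, so no conjugation makes the supports orthogonal. Here $a_-=b_-=0$ and $f=\mu(a)\oplus\mu(a)$, and this is exactly the hard case.

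In $ax-xb=(a_+x+xb_-)-(a_-x+xb_+)$ the parts $a_+$ and $b_+$ (likewise $a_-$ and $b_-$) enter with opposite signs. Wherever $x$ lays a piece of $b_+$ against a piece of $a_+$, they cancel. On the other hand, for a partial isometry with $xx^*\ge s(a)$ one has $\mu(ax-xb)=\mu(x^*ax-x^*xb)$, so any part of $s(b)$ outside $x^*x$ is simply lost.

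Your maps $v\colon s(b_-)\to s(a_+)$ and $w\colon s(a_-)\to s(b_+)$ also do not exist when the traces differ, e.g.\ $\tau(s(a_-))<\tau(s(b_+))=\infty$. Only a top spectral part $b_+e$ with $\tau(e)=\tau(s(a_-))$ can be aligned with $a_-$ (and $a_+e'$ with $b_-$). The infinite-trace remainders $a_+(\mathbf{1}-e')$ and $b_+(\mathbf{1}-e)$ must then be placed against each other when the kernels leave no room.

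The hoped-for exact identity $\mu(ax-xb)=f$ also fails. With $E=L_1$ in the example, $\|ax-xa\|_1=2\|a\|_1$ for a partial isometry $x$ would force $x^*x=xx^*=\mathbf{1}$ and then $a\,xax^*=0$, which is impossible.

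\textbf{The missing idea is interleaving.} Since $\mu(t;\cdot)\to0$, you can place large spectral pieces of $a_+(\mathbf{1}-e')$ on pieces of $b_+(\mathbf{1}-e)$ where $b_+$ is at most $\frac1N$ of the corresponding value of $a_+$. Symmetrically, place large pieces of $b_+(\mathbf{1}-e)$ on negligible pieces of $a_+(\mathbf{1}-e')$ (or of the kernels). Cancellation then costs only a factor $\frac{N-1}{N}$ and a dilation $\sigma_{N/(N+1)}$. The passage to an arbitrary (non-Fatou, non-separable) $E$ is then made via $b\mu(x)\vartriangleleft\sigma_b\mu(x)$ (see \eqref{dilationuniform}) and the monotonicity of symmetric norms under $\vartriangleleft$.

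The paper does the aligned pairings in Steps 1--2 of Lemma~\ref{lemma:approx} and the interleaving in Step 3. Note that the matching must go both ways, so that $u_N^*u_N\ge s(b)$ and $u_Nu_N^*\ge s(a)$. As written, Step 3 only places $a_+(\mathbf{1}-e')$ on the sparse slices $\vee_k s(g_k)$, so $b_+(\mathbf{1}-e-\vee_k s(g_k))$ is annihilated by $u_N$, and the identity $\mu(u_N^*au_N-b)=\mu(au_N-u_Nb)$ needs repair. The two-directional construction in the proof of Theorem~\ref{thm2} is the right model for that repair.

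\textbf{Upper bound.} Splitting $\tau(eu(ax-xb))$ into four terms only gives
$\int_0^t\mu(ax-xb)\le\int_0^t(\mu(a_+)+\mu(b_-))+\int_0^t(\mu(b_+)+\mu(a_-))$,
which can be twice $\int_0^t\mu(f)$ (take $b=0$, $\mu(a_\pm)=\chi_{[0,1)}$, $t=1$). The sharp bound needs the sign structure:
\begin{itemize}
\item reduce to unitaries $u$ by Russo--Dye;
\item write $\mu(au-ub)=\mu(X-Y)$ with $X=a_++ub_-u^*\ge0$ and $Y=a_-+ub_+u^*\ge0$;
\item use the pointwise inequality $\mu(X-Y)\le\mu(X\oplus Y)$ for positive $X,Y$.
\end{itemize}
Writing $ax-xb$ as a sum of a bounded number of compressions loses a multiplicative constant, so it cannot give the sharp uniform submajorization. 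What is needed is $\mu(X)\vartriangleleft\mu(a_+)+\mu(b_-)$ and $\mu(Y)\vartriangleleft\mu(a_-)+\mu(b_+)$ \cite[Lemma~3.4.4]{LSZ}, combined with Proposition~\ref{direct sum} at a cost $1-\varepsilon$; this is Theorem~\ref{upper estimate}.
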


Let $E$ be a Banach symmetric sequence space~\cite{KPS,LT1} and let $C_E$ be the symmetric ideal in $B(\cH)$ 
 generated by $E$ (via the Calkin correspondence), see \cite{LSZ,Kalton_S}.
In \cite{Kittaneh}, Kittaneh proved that if $a,b\in B(\cH)$ are
positive, and $x\in B(\cH)$, then 
$$\mu_j(ax-xb) \le \norm{x}_{B(\cH)}\mu_j(a\oplus b),$$
and therefore,
\begin{align}\label{kit}
\norm{ax-xb}_{C_E} \le \norm{x}_{B(\cH)} \norm{a\oplus b}_{C_E},
\end{align}
where 
    the direct sum notation   $a\oplus b$  denotes  the block-diagonal operator
   $ \left(
      \begin{array}{cc}
        a & 0 \\
        0 & b \\
      \end{array}
    \right)$
defined on $\cH \oplus  \cH$.

Specializing $\cM=B(\cH)$, 
Theorem~\ref{special} is an immediate consequence of  Theorem~\ref{main2}.
The proof of the above theorem is  completely different from those in  \cite{Stam,BHS,Zsido}.
One of the  main ingredients in the proof  is
an extension of    inequality  \eqref{kit} (see Theorem \ref{upper estimate} below)
 to the setting of self-adjoint operators, which relies on the notion of uniform submajorization. 
 Another important auxiliary result is an analogue of 
the commutator estimates in    \cite{BS2012,BS12b} (see Lemma \ref{lemma:approx}).

Our second main theorem treats positive operators affiliated with  properly infinite semifinite von Neumann algebras, which is not necessarily a factor. 
%

\begin{thm}\label{thm2}
Let $E(0,\infty)\subset S_0(0,\infty )$ be a symmetrically normed function space.
Let
 $\cM$ be a properly infinite von Neumann algebra equipped with a semifinite faithful normal trace and let $a ,b \in E(\cM,\tau)$ be positive operators.
Then,
 \begin{align}\label{===1}
\norm{\delta_{a,b } }_{\cM\to  E(\cM,\tau)}  =\norm{a\oplus b }_{E(\cM\oplus \cM,\tau \oplus \tau)}.
\end{align}
\end{thm}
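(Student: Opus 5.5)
We prove \eqref{===1} by establishing the two inequalities separately.

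\emph{Upper bound.} Fix $x\in\cM$ with $\norm{x}_\cM\le 1$. We will show that $ax-xb$ is uniformly submajorized by $a\oplus b$, in the sense of \cite{Kalton_S}. Since the norm of a symmetrically normed space is monotone with respect to uniform submajorization (this is the point of the Kalton--Sukochev correspondence), this gives $\norm{ax-xb}_{E}\le \norm{a\oplus b}_{E}$.

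To get the majorization, embed everything in $\cM\otimes M_2$ and use Kittaneh's device. With $X=\begin{pmatrix}0&x\\0&0\end{pmatrix}$ and $A=a\oplus b$ we have
\[
\begin{pmatrix}0&ax-xb\\0&0\end{pmatrix}=AX-XA .
\]
Write $AX-XA=A^{1/2}(A^{1/2}X)-(XA^{1/2})A^{1/2}$. Alternatively, note that $AX-XA$ is the off-diagonal part of $U^*AU-A$, where $U$ is the unitary dilation of the contraction $X$. Since $A\ge0$, both $A$ and $U^*AU$ are positive with the same singular values. For positive $s,t$ the difference $s-t$ is uniformly submajorized by $s\oplus t$, and in fact $\mu(s-t)\le\mu(s\oplus t)$ holds pointwise. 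Passing to an off-diagonal compression does not increase uniform majorization. Combining these gives $\mu(ax-xb)\prec\prec \mu(a\oplus b)$, and in fact the pointwise bound, as in \cite{Kittaneh}. This step is a special case of the paper's Theorem on the upper estimate (Theorem~\ref{upper estimate}), so we may cite it directly.

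\emph{Lower bound.} This is the main obstacle. We need contractions $x_n\in\cM$ with $\norm{ax_n-x_nb}_E\to\norm{a\oplus b}_E$. Proper infiniteness lets us find a projection $p$ with $p\sim{\bf 1}-p\sim{\bf 1}$.

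The idea is to move $b$ onto a region nearly orthogonal to $a$ using a partial isometry. Concretely, we take isometries $v,w$ with $vv^*+ww^*={\bf 1}$ and look for $x$ with $ax-xb\approx a\oplus(-b)$ transported into $\cM$ via $v,w$. Since $a,b$ are $\tau$-compact, we first approximate them in measure by operators supported on $\tau$-finite projections $e,f$. The approximants are truncations $a e_\lambda$ with $e_\lambda=\chi_{(\lambda,\infty)}(a)$.

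Next we choose partial isometries: $u$ with $u^*u\ge f$ and range orthogonal to $e\vee f$, which is possible because $\cM$ is properly infinite and the relevant projections have finite trace. Set $x_n=e-u$, or a suitable version such as $e_\lambda - u f_\lambda$, which has norm at most $1$. Then
\[
ax_n-x_nb=ae_\lambda - a e_\lambda\cdot 0\ldots+ub f_\lambda\ \text{(cross terms)},
\]
arranged so that, modulo small terms, $ax_n-x_nb$ equals $ae_\lambda+u b f_\lambda u^*$-type pieces acting on mutually orthogonal subspaces. Its singular value function is therefore $\mu(ae_\lambda\oplus bf_\lambda)$. The cross terms, such as $e_\lambda b$, need care. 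To kill them we choose $u$ so that $u f_\lambda$ has range orthogonal to the support of $a$, rather than merely of $e_\lambda$. Then $a u=0$, and we also need $e_\lambda u=0$. With $x=e_\lambda-u f_\lambda$ we get
\[
ax-xb = ae_\lambda - e_\lambda b + u f_\lambda b .
\]
The leftover term $e_\lambda b$ is not small in general. We therefore instead conjugate $a$'s part as well, taking $x=v e_\lambda - \ldots$; we expect a Lemma~\ref{lemma:approx}-type commutator estimate to handle this.

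Finally, the truncations converge: $\mu(ae_\lambda\oplus bf_\lambda)\uparrow\mu(a\oplus b)$. We still need the norm to converge, which fails without a Fatou property. We handle this by using the uniform submajorization framework to show that $\sup_\lambda\norm{ae_\lambda\oplus bf_\lambda}_E=\norm{a\oplus b}_E$ whenever $a\oplus b\in E$. This requires an argument, since it is not automatic for non-Fatou norms. If that is not available, we would instead choose $x$ exactly, with $e$ the full support of $a$. This is possible when the supports leave an infinite complement, which we arrange by doubling inside $\cM\cong\cM\otimes B(\ell_2)$ locally via proper infiniteness.
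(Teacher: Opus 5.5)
Your upper bound is fine once you cite Theorem~\ref{upper estimate}: for $a,b\ge0$ its right-hand side is $\norm{\mu(a)\oplus\mu(b)}_{E}=\norm{a\oplus b}_{E}$, which is Corollary~\ref{normest}. The dilation sketch before that citation is not needed, and the identification of $AX-XA$ with a corner of $U^*AU-A$ is not justified anyway.

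The lower bound, however, is not proved, and there are two genuine gaps. First, you never actually produce the contraction $x$. Your candidate $e_\lambda-uf_\lambda$ leaves the term $e_\lambda b$, which you yourself note is not small. Requiring the range of $uf_\lambda$ to be orthogonal to $s(a)$ is impossible whenever $s(a)={\bf 1}$, and this is allowed for $0\le a\in S_0(\cM,\tau)$. Appealing to a Lemma~\ref{lemma:approx}-type estimate does not fill the gap, because that lemma is proved only for factors (it uses total comparability of projections). The missing idea is to replace ``orthogonal to the support of the other operator'' by ``inside a spectral projection on which the other operator is small''. Since ${\bf 1}-e^b[0,\delta]$ and ${\bf 1}-e^a[0,\delta]$ have finite trace, Lemma~\ref{lem:cofinite-dominates} gives partial isometries $v_1$ from some $r_1\le e^b[0,\delta]$ onto $e_\lambda=e^a(\lambda,\infty)$, and $v_2$ from $f_\lambda=e^b(\lambda,\infty)$ onto some $r_2\le e^a[0,\delta]$, with $\delta<\lambda$. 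Then $x=v_1+v_2$ satisfies $\mu(ae_\lambda\oplus bf_\lambda)\le\mu(ax-xb)+2\delta\chi_{[0,\tau(e_\lambda)+\tau(f_\lambda))}$.

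Second, and decisively, even this only gives $\sup_\lambda\norm{ae_\lambda\oplus bf_\lambda}_E\le\norm{\delta_{a,b}}$. That supremum can be strictly smaller than $\norm{a\oplus b}_E$, because no Fatou property is assumed. For example, take $\psi(t)=\log(1+t)$ and renorm the Marcinkiewicz space $M_\psi\subset S_0(0,\infty)$ by $\norm{x}_{M_\psi}+\limsup_{t\to\infty}\psi(t)^{-1}\int_0^t\mu(s;x)\,ds$. This is an equivalent symmetric norm. The added term vanishes on every operator with finite-trace support, including all your truncations, yet it is at least $1$ at $a\oplus b$ when $\mu(a)=(1+t)^{-1}$. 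Your fallback with full supports fails when $s(a)=s(b)={\bf 1}$: no identification $\cM\cong\cM\bar\otimes B(\ell_2)$ creates room, since $a$, $b$ and $x$ are fixed elements of $\cM$.

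The paper avoids any limiting process in $E$ by working at all dyadic scales simultaneously, with multiplicative rather than additive errors. At step $n$, the unused large part $P_n=r^a_{n-1}e^a(2^{-n},\infty)$ of $a$ is matched, via Lemmas~\ref{lem:cofinite-dominates} and~\ref{lem:selection}, with a projection $R_n\le r^b_{n-1}e^b[0,2^{-n}\varepsilon]$ commuting with $b$. Symmetrically, the large part $Q_n$ of $b$ is matched with $S_n\le e^a[0,2^{-n}\varepsilon]$ commuting with $a$. The residues shrink to $e^a\{0\}$ and $e^b\{0\}$, so every spectral piece of $a$ and $b$ is used exactly once. The resulting single partial isometry $v$ gives $\mu(av-vb)=\mu(H-K)$ with $0\le K\le\varepsilon H$ and $\mu(H\oplus K)=\mu(a\oplus b)$ exactly. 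Hence $\norm{a\oplus b}_E\le\frac{1+\varepsilon}{1-\varepsilon}\norm{av-vb}_E$.
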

In the special cases  when $\cM$ is a factor or is properly infinite, Theorems \ref{main2} and~\ref{thm2} deliver an alternative  proof  of  the main results in \cite{BHS}. 
When $\cM=B(\cH)$, equality \eqref{===1} sharpens   inequality \eqref{kit}.

\section{Preliminaries}

In this section,
we state  some basic facts and notions that we need
for the further presentation. 
General information on von Neumann
algebras and noncommutative symmetric  spaces  can be found in
\cite{LSZ,KPS,LT2}.
\subsection{$\tau$-measurable operators}
Let $I = (0,a)$ with  $a\in (0,\infty]$,   equipped with  the
Lebesgue measure $m$. Denote by $\Sigma$  the $\sigma$-algebra of Lebesgue measurable subsets of $I$. By $(I,m)$ we denote the measure space $(I,\Sigma,m)$. 
Let $S(I,m)$ (or $S(I)$ for brevity) be the space of all equivalence classes of almost everywhere finite Lebesgue measurable
real-valued (or complex-valued) 
functions on $I$. For $x\in S(I)$, we denote by
$\mu(x)$ the decreasing rearrangement of the function $|x|$\cite{KPS,LSZ,LT2}. That is,
$$
\mu(t; x)=\inf \left\{s\geq0:\ m(\{|x|>s\})\leq t \right\},\quad t>0.
$$
We denote by $S_0(0,\infty )$ the set $\{x\in S(0,\infty )\mid \lim\limits_{t\to \infty}\mu(t;x)=0\}$.

Let $\mathcal{M}$ be a   von Neumann algebra on a   Hilbert space $\cH$ with  the identity ${\bf 1}$.
Let $P(\mathcal{M})$ denote the lattice of all projections in $\mathcal{M}$ and
$U(\cM)$ denote the  set of all unitary elements in $\cM$.


We say that an operator   $x$ is \emph{measurable} (denoted by $x  \in S(\cM)$) if  $x$ is closed,
densely defined, affiliated with $\cM$ and $e 
^{|x|}
(\lambda,\infty )$ is a finite projection for some $\lambda >0$, where $e ^{|x|}$ stands for the spectral measure of $|x|$. It follows
immediately that in the case when $\cM$ is a von Neumann algebra of type $III$ or a type $I$ factor,
we have $S(\cM) =\cM$. For type $II$ von Neumann algebras, this is no longer true\cite{LSZ,DPS}.
We use the notation  \( l(x) \), \( r(x) \) and  \( c(x) \) to denote the   left support, the right support, and  the central support respectively of a measurable operator  \( x  \).
If, in addition,  $x$ is self-adjoint, then   we have  \( l(x) =  r(x) \). In this case, we denote both  the  left and the  right supports  by  \( s(x) \).

Let $\cM$ be a semifinite von Neumann algebra on a   Hilbert space $\cH$ equipped with a faithful normal semifinite trace $\tau$.
A closed and densely defined operator $A$ affiliated with $\mathcal{M}$ is called \emph{$\tau$-measurable} if
$\tau(e ^{|x|}(s,\infty))<\infty$ for sufficiently large $s$.
 We denote the set of all $\tau$-measurable operators by
$S(\mathcal{M},\tau)$\cite{LSZ,DPS}.
For every $x\in S(\mathcal{M},\tau),$ we define its singular value function $\mu(x)$ by setting
$$\mu(t; x)=\inf\{\norm{x({\bf 1}-p)}_{ \infty   }:\forall p\in P(\mathcal{M}),\quad \tau(p)\leq t\}, \quad t>0.$$
We denote by $S_0(\cM,\tau)$ the set $\{x\in S(\cM,\tau)\mid \lim\limits_{t\to \infty}\mu(t;x)=0\}$.

A closed operator $x$ affiliated with $\cM$ is called \emph{locally measurable} if there exists a sequence $\{z_n\}^\infty
_{n=1}$ of
central projections in $\cM$ such that $z_n \uparrow {\bf 1}$ and $xz_n \in S(\cM)$ for any $n \in \mathbb{N}$. The collection of
all locally measurable operators with respect to $\cM$ is denoted by $LS(\cM)$, which is a unital
$*$-algebra with respect to strong sums and products (denoted simply by $x + y$ and $xy$ for all
$x,y \in  LS(\cM))$\cite{BS12b,BCLSZ,BCS}.


\subsection{Symmetric spaces}

\begin{definition}
We say that $(E(I),\left\|\cdot\right\|_E) $ (or    $(E,\left\|\cdot\right\|_E)$ for brevity) is a symmetrically normed   function space
on $I$ if the following hold:
\begin{enumerate}
\item $E$ is a subspace of $S(I);$
\item $(E,\left\|\cdot\right\|_E)$ is a normed space;
\item If $x\in E$ and if $y\in S(I)$  are such that $\mu(y)\leq\mu(x),$ then $y\in E$ and $\left\|y\right\|_E\leq \left\|x\right\|_E.$
\end{enumerate}
If $E$ is a Banach space, then it is called a symmetric function space. 
If  for any $x,y\in E$ and $\int_{0}^t \mu(s;y)ds\le \int_{0}^t \mu(s;x)ds$, $t>0$ (denoted by $y\prec \prec x$), implies that $\norm{y}_E\le\norm{x}_E$, then $E$ is said to be a strongly  symmetrically normed  space (if, in addition,  $E$ is a Banach space, then it is called a strongly symmetric space).
\end{definition}
For the general theory of symmetric function  spaces, we refer the reader to \cite{LT2,DPS,KPS}.

\begin{definition}
Let $  \cE$   be a linear   subspace of    $S(\mathcal{M},\tau)$ equipped with a 
norm $\left\|\cdot\right\|_{\cE}$.
We say that
$ \cE $ is a symmetrically normed
operator space if for $ x\in \cE $  and for every $y\in S(\mathcal{M},\tau)$ with
$\mu(y)\leq\mu(x),$ we have $y\in \cE $
and $\left\|y\right\|_{\cE} \leq
\left\|x\right\|_{ \cE} $.
If $\cE$ is a Banach space, then it is called a symmetric operator space. 
If  for any $x,y\in \cE$ and   $y\prec \prec x$, implies that $\norm{y}_\cE\le\norm{x}_\cE$, then $\cE$ is said to be a strongly symmetrically normed space (if, in addition, $\cE$ is Banach, then it is called a strongly symmetric space).
\end{definition}

Recall the following  construction of a symmetric   operator space (or
non-commutative symmetric  Banach space) $E(\mathcal{M},\tau)$.
Let
$E$ be a symmetrically normed   function   space on $(0,\tau({\bf 1}))$.
Set
$$
E(\mathcal{M},\tau)=\Big\{x\in S(\mathcal{M},\tau):\ \mu(x)\in E\Big\}
.$$
The  natural norm on $E(\mathcal{M},\tau)$ is defined by
$$
\left\|x\right\|_{E (\mathcal{M},\tau)}:= \left\|\mu(x)\right\|_E,\quad x \in E(\mathcal{M},\tau).
$$
We note that $E(\mathcal{M},\tau)$  is a  Banach space with respect to $\left\|\cdot\right\|_{E (\mathcal{M},\tau)}$ if $E$ is Banach, and it is called the (non-commutative) symmetric operator space associated
with $(\mathcal{M},\tau)$ corresponding to $(E,\left\|\cdot\right\|_{E})$ \cite{Kalton_S,LSZ,DPS}.

Recall that derivations from a semifinite von Neumann algebra $\cM$ into a symmetric space $E(\cM,\tau)$ are inner\cite{BCS}. 
 For generalized derivations, we have a similar result. 
 
 \begin{proposition}\label{gen_inner} Let $\cM$ be a semifinite von Neumann algebra equipped with a faithful normal trace $\tau$. 
 Assume that $E(\cM,\tau)$ is one of the followings:\begin{enumerate}
                                                             \item   an (not necessarily normed) ideal  in $\cM$;
                                                             \item   a Banach $\cM$-bimodule (see \cite[Chapter 4]{DPS} or \cite[Section 6]{BCS} for the definition) of locally measurable operators affiliated with $\cM$; 
                                                             \item     an $\cM$-bimodule of locally measurable operators affiliated with a properly infinite von Neumann algebra $\cM$.
                                                           \end{enumerate}
 Let $a,b\in S(\cM,\tau)$ be such that $\delta_{a,b}$ is bounded from $\cM$ into $E(\cM,\tau)$.
 There exists $z\in Z(S(\cM,\tau))$ such that $a-z,b-z\in E(\cM,\tau)$ 
 and $\delta_{a,b}=\delta_{a-z,b-z}$. 
 \end{proposition}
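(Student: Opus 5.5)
The plan is to reduce Proposition~\ref{gen_inner} to the known fact that derivations from $\cM$ into $E(\cM,\tau)$ are inner (\cite{BCS}, and its analogues for the settings (1)--(3)). We do this by passing to the $2\times 2$ matrix algebra. Put $\widetilde{\cM}=\cM\otimes M_2(\mathbb{C})$ with trace $\widetilde\tau=\tau\otimes{\rm Tr}$, and let $\widetilde E$ be the corresponding bimodule $E(\cM,\tau)\otimes M_2$, i.e.\ $2\times 2$ matrices with entries in $E(\cM,\tau)$. In each of the three cases $\widetilde E$ is again of the same type: an ideal in $\widetilde\cM$, a Banach $\widetilde\cM$-bimodule, or a bimodule over the properly infinite algebra $\widetilde\cM$. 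Set $A=a\oplus b\in S(\widetilde\cM,\widetilde\tau)$ and consider the inner derivation $\Delta=\delta_A$ on $\widetilde\cM$. For $X=(x_{ij})$ we have
\[
\Delta(X)=\begin{pmatrix} \delta_{a,a}(x_{11}) & \delta_{a,b}(x_{12})\\ \delta_{b,a}(x_{21}) & \delta_{b,b}(x_{22})\end{pmatrix}.
\]
Only the $(1,2)$ entry is controlled by the hypothesis, so the first step is to make all entries lie in $E$.

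A cleaner route avoids this issue. Consider the derivation $D(X)=[A,X]$ restricted to the off-diagonal corner, or use the standard trick: $\delta_{a,b}(x)=ax-xb$. Taking $x={\bf 1}$ gives $a-b=\delta_{a,b}({\bf 1})\in E$. Then for $x\in\cM$,
\[
\delta_{a,a}(x)=ax-xa=\delta_{a,b}(x)+x(b-a)\in E,
\]
since $E$ is an $\cM$-bimodule. Hence $\delta_a:\cM\to E$ is a derivation, and it is bounded whenever $\delta_{a,b}$ is (in case (2), since $\|x(b-a)\|\le\|x\|\|b-a\|$; in cases (1) and (3) no boundedness is needed). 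Applying \cite{BCS}, in the relevant variant for each case, yields $d\in E(\cM,\tau)$ with $\delta_a=\delta_d$ on $\cM$. Then $a-d$ commutes with every element of $\cM$. Since $a-d\in S(\cM,\tau)$ is affiliated with $\cM$, it lies in $\cM'\cap S(\cM,\tau)$, hence in $Z(S(\cM,\tau))$. Set $z=a-d$. Then $a-z=d\in E$ and $b-z=(b-a)+d\in E$, and since $z$ is central, $\delta_{a-z,b-z}(x)=ax-xb-zx+xz=\delta_{a,b}(x)$.

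The main obstacle is invoking the innerness theorem in each of the three settings with the correct hypotheses. Case (2) is exactly \cite[Section 6]{BCS}. For case (3) I would use the properly infinite result that derivations into $\cM$-bimodules of locally measurable operators are inner (\cite{BS12b}/\cite{BCS}). For case (1), $\delta_a$ maps $\cM$ into an ideal $J\subset\cM$, so in particular into $\cM$, and is therefore implemented by some element of $\cM$ by \cite[Theorem 4.1.6]{Sakai}. One then still has to show that the implementing element can be adjusted by a central element to lie in $J$; I would do this via the commutator estimates of \cite{BS2012,BS12b}, which show that $\inf_{c}$ distance considerations force $a-c\in J$ for some central $c$. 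A further point to check is that the element $z$ obtained is $\tau$-measurable; this holds because $z=a-d$ with $a,d\in S(\cM,\tau)$.
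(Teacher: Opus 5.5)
Your argument is correct and is essentially the paper's proof: from $a-b=\delta_{a,b}({\bf 1})\in E(\cM,\tau)$ you get $\delta_a(x)=\delta_{a,b}(x)+x(b-a)\in E(\cM,\tau)$. You then invoke the innerness results case by case; for case (1) the paper cites \cite[Corollary 3]{BS12b} directly, which is the commutator-estimate step you defer to after Sakai. Finally you take the central element $z=a-d$. The opening $2\times 2$ matrix paragraph is never used and can be dropped, and your observation $b-z=(b-a)+d$ gives $b-z\in E(\cM,\tau)$ slightly more directly than the paper's argument via $x(b-z)\in E(\cM,\tau)$.
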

\begin{proof}
Observing that  $a-b= \delta_{a,b}({\bf 1})\in E(\cM,\tau)$, we have 
$$\delta_a(x) = \delta_{a,b}(x) -x(a-b) \in E(\cM,\tau), \,\, x\in\cM.$$
  By \cite[Corollary 3]{BS12b} (respectively, \cite[Theorem 6.10]{BCS}, \cite[Theorem 6.3]{BCS}  or \cite[Theorem 5.3.7, Corollary 7.6.3, Theorem 7.7.7]{BCLSZ}), 
  $\delta_a $ is an inner derivation, i.e., there exists $a'\in E(\cM,\tau)$ such that $\delta_a=\delta_{a'}$.
  Hence,  
   $a-a' $ commutes with all elements in $\cM$ and therefore, commutes with all elements in $S(\cM,\tau)$ (see e.g. \cite[Proposition 2.2.22]{DPS}).
   Since $a,a'\in S(\cM,\tau)$, it follows that  
   $z:= a-a'\in Z(S(\cM,\tau))$.     
  Since  $a-z=a' \in E(\cM,\tau)$ and  $$(a-z)x -x (b-z) =ax-xb \in E(\cM,\tau)$$ for any $x\in \cM$, it follows that $x(b-z)\in E(\cM,\tau)$ for any $x\in \cM$. 
  In particular, $b-z\in E(\cM,\tau)$.
\end{proof}

 \section{Norm estimate of a generalized derivation}

Fialkow and Loebl's question concerns   ideals with symmetric norms, which are not necessarily separable. 
The main tool which allows to extend Calkin correspondence to  general  symmetrically normed
spaces is the so-called \emph{uniform submajorization} introduced in  \cite{Kalton_S} (see also \cite{LSZ}):
for $x,y\in S(\cM,\tau)$, $y$ is said to be uniformly submajorized by $x$ (written $y \vartriangleleft x$) if there exists $\lambda \in \mathbb{N}$ such that
$$\int_{\lambda t_1}^{t_2} \mu(s;y)ds \le \int_{t_1}^{t_2} \mu(s;x)ds ,~\forall ~ 0\le \lambda t_1 \le t_2 . $$
 If $t >0,$ the dilation operator $\sigma_{t}$ is defined by setting 
  $$(\sigma_{t}x)(s)=x\left(\frac{s}{t}\right),~s>0,$$ for any $x\in S(0,\infty )$. 
Observe that, for any $0<b \le 1$ and $x\in L_1(\cM,\tau)+\cM$, 
we have 
$$\int_{\frac{1}{b} t_1 }^{t_2} b\mu(s;x)ds \le  \int_{\frac{1}{b} t_1}^{\frac{1}{b} t_2 }  \mu(s ;x) d(b  s) = \int_{t_1}^{t_2}   \mu(s/b;x) ds, $$
i.e., 
\begin{align}\label{dilationuniform}
 b\mu(x) \vartriangleleft \sigma_b \mu(x) .  \end{align}

\begin{proposition}\label{direct sum}Let $\cM$ be a semifinite von Neumann algebra equipped with a semifinite faithful normal trace $\tau$. 
Let $x_1,y_1,x_2,y_2\in L_1(\cM,\tau)+\cM$
be such that $x_1 \vartriangleleft x_2$ and $y_1\vartriangleleft y_2$.
For any $\varepsilon>0$, we have 
$$(1-\varepsilon ) (x_1\oplus y_1) \vartriangleleft x_2 \oplus y_2.$$
\end{proposition}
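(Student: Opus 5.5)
The plan is to pass to singular value functions, using that $\mu(x\oplus y)$ is the decreasing rearrangement of the function $\mu(x)\oplus\mu(y)$ on $(0,\infty)\sqcup(0,\infty)$. Concretely,
$$\mu(t;x\oplus y)=\inf\{s\ge 0: d_{\mu(x)}(s)+d_{\mu(y)}(s)\le t\},$$
where $d_f(s)=m(\{f>s\})$. The claim is then a statement about functions $f_i=\mu(x_i)$, $g_i=\mu(y_i)$ on $(0,\infty)$ with $f_1\vartriangleleft f_2$ (constant $\lambda_1$) and $g_1\vartriangleleft g_2$ (constant $\lambda_2$). Put $\lambda=\max(\lambda_1,\lambda_2)$; the uniform submajorization inequality persists when $\lambda$ is enlarged, since the left integral only shrinks. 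We must find $\Lambda\in\mathbb{N}$ with
$$\int_{\Lambda t_1}^{t_2}(1-\varepsilon)\mu(s;x_1\oplus y_1)\,ds\le\int_{t_1}^{t_2}\mu(s;x_2\oplus y_2)\,ds,\qquad 0\le\Lambda t_1\le t_2 .$$

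The key tool is a level-set description of these integrals. For a decreasing function $h$ and $u\le v$, $\int_u^v h$ is the integral of $h$ over an interval where $h$ lies between $h(v)$ and $h(u)$. For the direct sum this translates to: $\int_{t_1}^{t_2}\mu(x_2\oplus y_2)$ equals the maximum of $\int_{A}f_2+\int_B g_2$ over suitable sets, namely $A=(\alpha_1,\alpha_2)$ and $B=(\beta_1,\beta_2)$ with $\alpha_1+\beta_1=t_1$ and $\alpha_2+\beta_2=t_2$. More precisely, the integral equals the maximum of $\int_{\alpha_1}^{\alpha_2}f_2+\int_{\beta_1}^{\beta_2}g_2$ subject to the constraints $\alpha_1+\beta_1\ge t_1$ (it is a "middle" portion) and total length $t_2-t_1$. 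I would establish this as a lemma via the standard rearrangement argument (a Hardy–Littlewood type inequality for sums of decreasing pieces); up to boundary adjustments one has "$\ge$" in the direction needed.

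For the left-hand side, the level-set description gives a splitting point: the interval $(\Lambda t_1,t_2)$ for $x_1\oplus y_1$ corresponds to intervals $(a_1,a_2)$ for $f_1$ and $(b_1,b_2)$ for $g_1$ with $a_1+b_1=\Lambda t_1$ and $a_2+b_2=t_2$. Then
$$\int_{\Lambda t_1}^{t_2}\mu(x_1\oplus y_1)=\int_{a_1}^{a_2}f_1+\int_{b_1}^{b_2}g_1 .$$
Choose $\Lambda=2\lambda$. We then have $a_1+b_1=2\lambda t_1$.

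If both $a_1\ge\lambda t_1'$ and $b_1\ge\lambda t_1''$ for some $t_1'+t_1''=t_1$, we can take $t_1'=a_1/\lambda$ and $t_1''=b_1/\lambda$, whose sum is $2t_1\ge t_1$. Applying the hypotheses gives
$$\int_{a_1}^{a_2}f_1\le\int_{a_1/\lambda}^{a_2}f_2 \quad\text{and}\quad \int_{b_1}^{b_2}g_1\le\int_{b_1/\lambda}^{b_2}g_2 .$$
The right sides sum to integrals over sets of total start $2t_1$ and total length at most $t_2-t_1$... but not exactly, because the lengths now exceed $t_2-t_1$. This excess is where the factor $1-\varepsilon$ and a larger $\Lambda$ enter. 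I expect the main obstacle to be controlling the extra length $(a_1+b_1)(1-1/\lambda)$ compared to $t_2-t_1$.

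To handle the excess, I would take $\Lambda=N\lambda$ with $N$ large, depending on $\varepsilon$. The extra pieces, over $(a_1/\lambda,a_1)$ and $(b_1/\lambda,b_1)$, have total length about $\Lambda t_1/\lambda=Nt_1$ and sit where $f_2,g_2$ are at least their values on the main ranges. Drop the sub-interval $(t_1, Nt_1)$-worth of the leftmost mass and compare averages: by monotonicity, the integral of a decreasing function over $(t_1,t_2)$ is at least $(1-1/N)$ times its integral over the longer window shifted left. Choosing $N\ge 1/\varepsilon$ gives $(1-\varepsilon)$ times the left side $\le$ the right side. A degenerate case occurs when one of $a_1,b_1$ is tiny (or zero); there the hypothesis with $t_1=0$ (plain submajorization) is used for that component. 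The same averaging argument applies, and the borderline cases $t_1=0$ reduce to $\prec\prec$ for direct sums, which is classical.
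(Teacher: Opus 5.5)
There is a genuine gap, and it sits in the step you call the level-set description. As stated, that lemma is false. The integral $\int_{t_1}^{t_2}\mu(x_2\oplus y_2)$ is \emph{not} the maximum of $\int_{\alpha_1}^{\alpha_2}f_2+\int_{\beta_1}^{\beta_2}g_2$ over splits with $\alpha_1+\beta_1=t_1$ (or $\ge t_1$). Take $f_2=(2-s)_+$, $g_2=0$, $t_1=1$, $t_2=2$: then $\int_1^2\mu(f_2\oplus g_2)=\tfrac12$, but $\alpha_1=0$, $\alpha_2=1$, $\beta_1=\beta_2=1$ gives $\tfrac32$. What is true is only this: $\int_p^{p+u}f_2+\int_q^{q+w}g_2\le\int_{t_1}^{t_2}\mu(x_2\oplus y_2)$ whenever $u+w\le t_2-t_1$ and $p,q$ \emph{individually} dominate the level split of $t_1$ for $f_2\oplus g_2$ (for instance $p,q\ge t_1$).

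So the obstacle is not excess length: your right-hand intervals have total length $t_2-\Lambda t_1/\lambda\le t_2-t_1$ already. The obstacle is the starting points. If, say, $a_1<\lambda t_1$, the hypothesis only gives $\int_{a_1}^{a_2}f_1\le\int_{a_1/\lambda}^{a_2}f_2$ with $a_1/\lambda<t_1$. That right side can contain the top of $\mu(x_2\oplus y_2)$, which $\int_{t_1}^{t_2}$ never sees. Concretely, take $\lambda=1$, $f_1=\delta\chi_{(0,1)}$, $f_2=M\chi_{(0,1)}$, $g_1=g_2=\chi_{(0,L)}$, $t_1=1$, $t_2=L+1$ and $\Lambda<L$. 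Then $a_1=0$, $a_2=1$, $b_1=\Lambda$, $b_2=L$. Your bound is $M+L-\Lambda$, the target is $L$, and using plain submajorization for the $f$-component (your degenerate case) produces the same $M$. The averaging step cannot repair this. It is false that a decreasing function's integral over $(t_1,t_2)$ is at least $(1-1/N)$ times its integral over a longer window shifted left; take $h=M\chi_{(0,t_1)}+\chi_{(t_1,t_2)}$. Besides, the extra pieces sit inside your upper bound, so they cannot simply be dropped.

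The missing idea is not to apply the hypothesis to $f_1$ near $a_1$ at all. Let $\ell=\mu(\Lambda t_1;x_1\oplus y_1)$. Then $f_1\le\ell$ on $(a_1,\infty)$ and $g_1\ge\ell$ on $(0,b_1)$. If $a_1<\lambda t_1$ and $\Lambda=N\lambda$, then $b_1-\lambda t_1>(N-2)\lambda t_1$, so $\int_{a_1}^{\lambda t_1}f_1\le\lambda t_1\ell\le\frac{1}{N-2}\int_{\lambda t_1}^{b_1}g_1\le\frac{1}{N-2}\int_{t_1}^{b_1}g_2\le\frac{1}{N-2}\int_{t_1}^{t_2}\mu(x_2\oplus y_2)$. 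The remaining pieces satisfy $\int_{\lambda t_1}^{a_2}f_1\le\int_{t_1}^{a_2}f_2$ and $\int_{b_1}^{b_2}g_1\le\int_{b_1/\lambda}^{b_2}g_2$. Both start at points $\ge t_1$, so the correct version of the lemma handles them. Together this gives $\int_{\Lambda t_1}^{t_2}\mu(x_1\oplus y_1)\le(1+\frac{1}{N-2})\int_{t_1}^{t_2}\mu(x_2\oplus y_2)$, which is where the factor $1-\varepsilon$ genuinely comes from. With this repair your route becomes a self-contained elementary proof.

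The paper bypasses all of this with the Kalton--Sukochev characterization \cite[Theorem 3.4.2]{LSZ}. It writes $(1-\varepsilon)\mu(x_1)$ and $(1-\varepsilon)\mu(y_1)$ as averages of functions $a_k,b_k$ with $\mu(a_k)\le\mu(x_2)$ and $\mu(b_k)\le\mu(y_2)$. It then observes $\mu(a_k\oplus b_k)\le\mu(x_2\oplus y_2)$ and converts back to $\vartriangleleft$ using the same theorem.
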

\begin{proof}
By \cite[Theorem 3.4.2]{LSZ},
for any $\varepsilon>0$, there exists an integer   $n\ge 1$ such that   
$$(1-\varepsilon )\mu(x_1) = \sum_{k=1}^n \frac1n  a_k,    $$
and
$$(1-\varepsilon )\mu(y_1) = \sum_{k=1}^n  \frac1n  b_k ,   $$
where  $0\le a_k,b_k\in (L_1+L_\infty)(0,\infty)$,~ $\mu(a_k )\le \mu(x_2)$ and $\mu(b_k)\le \mu(y_2)$, ~$k=1,\cdots ,n$.
Therefore,
$$(1-\varepsilon)(\mu(x_1)\oplus \mu(y_1)) 
= \frac{1}{n } \sum_{k=1}^n  \left(  a_k\oplus  b_k  \right),$$
where  $\mu(a_k \oplus b_k)
	\le 
\mu(x_2\oplus y_2) $  \cite[Remark 3.2]{HanS}.
By \cite[Theorem 3.4.2]{LSZ}, we have 
$$(1-\varepsilon)(\mu(x_1)\oplus \mu(y_1))  \vartriangleleft  x_2\oplus y_2,$$
which completes the proof.
\end{proof}

As mentioned in the Introduction, the following theorem plays an important role in the proof of Theorem \ref{main2}.
It provides an upper estimate for the norm of a generalized derivation. 

\begin{theorem}\label{upper estimate}
Let $\cM$ be a semifinite von Neumann algebra equipped with a semifinite faithful normal trace $\tau$. 
Let $E(0,\infty )$ be a symmetrically normed function space. 
For 
any self-adjoint $ a,b\in E(\cM,\tau)$, we have 
\begin{align*} \norm{\delta_{a,b} }_{\cM\to E(\cM,\tau)}\le
 \norm{(\mu(a_+) +\mu(  b_- )) \oplus (\mu(a_- )+\mu( b_+ )) }_{E( 0,\infty )}.
\end{align*}
\end{theorem}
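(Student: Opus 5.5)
We prove the bound operator-wise: for a contraction $x\in\cM$ we show that $ax-xb$ is uniformly submajorized by $(\mu(a_+)+\mu(b_-))\oplus(\mu(a_-)+\mu(b_+))$, up to a factor $(1-\varepsilon)$. Since a symmetrically normed space is monotone with respect to uniform submajorization (Kalton--Sukochev; see \cite{LSZ}), this gives $\norm{ax-xb}_{E(\cM,\tau)}\le(1-\varepsilon)^{-1}\norm{(\mu(a_+)+\mu(b_-))\oplus(\mu(a_-)+\mu(b_+))}_E$. Letting $\varepsilon\to0$ and taking the supremum over $\norm{x}_\infty\le1$ proves the theorem.

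\textbf{Splitting.} Write $a=a_+-a_-$ and $b=b_+-b_-$. Then
\[
ax-xb=(a_+x+xb_-)-(a_-x+xb_+)=:T_1-T_2 .
\]
Each $T_i$ is of the form $cx+xd$ with $c,d\ge0$. Kittaneh's inequality \eqref{kit}, in its singular-value form $\mu_j(cx-xd)\le\norm{x}\mu_j(c\oplus d)$, is available only for differences, but the sum case should be easier. Pass to $\cM\otimes M_2$ and write
\[
cx+xd=\begin{pmatrix}c^{1/2}&d^{1/2}\end{pmatrix}\begin{pmatrix}x&0\\0&x\end{pmatrix}\begin{pmatrix}c^{1/2}\\ d^{1/2}\end{pmatrix}
\]
(with suitable rearrangement of the factors $c^{1/2}x c^{1/2}$ versus $cx$). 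The natural target is the bound $\mu(cx+xd)\prec\prec\mu(c)+\mu(d)$, together with its uniform version $cx+xd\vartriangleleft \mu(c)+\mu(d)$. The direct route is: $cx$ and $xd$ satisfy $\mu(cx)\le\mu(c)$ and $\mu(xd)\le\mu(d)$, and the submajorization $\mu(u+v)\prec\prec\mu(u)+\mu(v)$ holds in its uniform form as well (this is standard in \cite{LSZ}; the uniform version for sums follows from the $\lambda=2$ trick). That yields $T_1\vartriangleleft\mu(a_+)+\mu(b_-)$ and $T_2\vartriangleleft\mu(a_-)+\mu(b_+)$.

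\textbf{Combining the two pieces.} We now need
\[
T_1-T_2\vartriangleleft T_1\oplus T_2 \quad\text{(up to } 1-\varepsilon\text{)}.
\]
Note the direct-sum structure rather than a sum. The plain triangle inequality would give only $\mu(T_1)+\mu(T_2)$, whose $E$-norm is larger than that of the direct sum. To get the direct sum, exploit disjointness of supports, as Kittaneh does for positive $a,b$:
\[
T_1-T_2=\begin{pmatrix}{\bf 1}&{\bf 1}\end{pmatrix}\begin{pmatrix}T_1&0\\0&-T_2\end{pmatrix}\begin{pmatrix}{\bf 1}\\{\bf 1}\end{pmatrix}.
\]
This only gives a factor $2$ naively, so we instead use the supports. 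Let $p=s(a_+)$, $q=s(a_-)$, $r=s(b_+)$, $s=s(b_-)$, with $pq=0$ and $rs=0$. Then
\[
ax-xb=\begin{pmatrix}a_+^{1/2}& a_-^{1/2}&b_-^{1/2}&b_+^{1/2}\end{pmatrix}\cdot(\text{contraction-type middle})\cdot(\cdots),
\]
arranged so that the row operator $R$ satisfies $RR^*=a_++a_-+\cdots$. A cleaner formulation: set
\[
X=\begin{pmatrix}a_+^{1/2}&b_-^{1/2}\end{pmatrix}\ \text{(row)},\qquad Y=\begin{pmatrix}a_-^{1/2}& b_+^{1/2}\end{pmatrix}.
\]
Then $T_1=X\,\mathrm{diag}(x,x)$-type expressions, and the orthogonality $pq=0$, $rs=0$ means the two rows act on orthogonal ranges of the combined operator $ax-xb$ viewed in $\cM\otimes M_2$. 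So $\mu(ax-xb)$ is controlled by $\mu\bigl((a_+ +\tilde b_-)\oplus(a_-+\tilde b_+)\bigr)$, where the tildes denote the appropriate left/right placements.

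\textbf{Expected main obstacle.} The crux is this last step: producing the direct sum $\oplus$ (via orthogonality of $a_\pm$ and of $b_\pm$) rather than the sum $\mu(T_1)+\mu(T_2)$, and doing so at the level of uniform submajorization. My first attempt would be the $2\times2$ matrix trick of Kittaneh: write
\[
\begin{pmatrix}ax-xb&0\\0&0\end{pmatrix}=A^{1/2}UB^{1/2}
\]
with $U$ a contraction built from $x$ and the partial isometries, and $A,B$ unitarily equivalent to $(a_++b_-)\oplus(a_-+b_+)$. Then apply Hölder-type singular value inequalities:
\[
\mu(A^{1/2}UB^{1/2})\prec\prec\mu(A)^{1/2}\mu(B)^{1/2},
\]
valid in log-submajorization and hence in uniform form. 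The pieces are then assembled with Proposition~\ref{direct sum}, which gives $(1-\varepsilon)(x_1\oplus y_1)\vartriangleleft x_2\oplus y_2$ from the componentwise bounds; this accounts for the harmless factor $1-\varepsilon$.
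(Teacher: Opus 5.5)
Your splitting $ax-xb=T_1-T_2$, with $T_1=a_+x+xb_-$ and $T_2=a_-x+xb_+$, is the right one. The bounds $T_1\vartriangleleft\mu(a_+)+\mu(b_-)$ and $T_2\vartriangleleft\mu(a_-)+\mu(b_+)$ are fine, and so is the final assembly via Proposition~\ref{direct sum}.

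\textbf{The gap.} It is exactly the step you flag. The claim $(1-\varepsilon)(T_1-T_2)\vartriangleleft T_1\oplus T_2$ is false for a general contraction $x$, because then $T_1$ and $T_2$ are not positive. For instance, in $\mathbb{M}_2$ take $a=\mathrm{diag}(1,-1)$, $b=0$ and $x=\frac{1}{\sqrt2}\begin{pmatrix}1&0\\1&0\end{pmatrix}$, so $\norm{x}_\infty=1$. Then $T_1=a_+x$ and $T_2=a_-x$ even have orthogonal ranges. Yet $\mu(T_1-T_2)=\mu(ax)=\chi_{[0,1)}$ while $\mu(T_1\oplus T_2)=\frac{1}{\sqrt2}\chi_{[0,2)}$. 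So the integrals over $[0,1]$ already violate the inequality for every $\varepsilon<1-\frac{1}{\sqrt2}$; taking $t_1=0$ shows uniform submajorization contains plain submajorization. The theorem's bound $\chi_{[0,2)}$ is not violated here; it is your intermediate step that loses too much.

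The example also shows that orthogonality of supports does not produce the direct sum; it only gives $|T_1-T_2|^2=|T_1|^2+|T_2|^2$. Your other suggestions do not close the gap either. The Kittaneh-type factorization $A^{1/2}UB^{1/2}$ is never constructed. The H\"older/log-submajorization bound yields at best $\prec\prec$, which controls norms only in strongly symmetric spaces, not in an arbitrary symmetrically normed $E$. The passage from log-submajorization to $\vartriangleleft$ is asserted without argument.

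\textbf{The missing idea.} Reduce to unitaries first; this restores positivity. By the Russo--Dye theorem, $\norm{\delta_{a,b}}_{\cM\to E(\cM,\tau)}=\sup_{u\in U(\cM)}\norm{au-ub}_{E(\cM,\tau)}$. For unitary $u$,
\[
au-ub=(a-ubu^*)u=\bigl((a_++ub_-u^*)-(a_-+ub_+u^*)\bigr)u ,
\]
so your $T_1u^*$ and $T_2u^*$ are positive. For $A,B\ge0$ one has the pointwise estimate $\mu(A-B)\le\mu(A\oplus B)$ (the Bhatia--Kittaneh inequality, in the form of \cite[Thm.~3.4(1)]{HanS}). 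Hence
\[
\mu(au-ub)\le\mu\bigl((a_++ub_-u^*)\oplus(a_-+ub_+u^*)\bigr).
\]
From there your remaining steps finish the proof exactly as in the paper:
\begin{itemize}
\item $\mu(a_++ub_-u^*)\vartriangleleft\mu(a_+)+\mu(b_-)$ by \cite[Lemma~3.4.4]{LSZ} and unitary invariance, and likewise for the other summand;
\item Proposition~\ref{direct sum};
\item \cite[Corollary~3.4.3]{LSZ}.
\end{itemize}
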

\begin{proof}
By the Russo--Dye theorem\cite[Corollary 1]{RD}, it suffices to prove 
that 
\begin{align*} \norm{\delta_{a,b}(u) }_{  E(\cM,\tau)}\le
 \norm{(\mu(a_+) +\mu(  b_- )) \oplus (\mu(a_- )+\mu( b_+ )) }_{E( 0,\infty )}
\end{align*}
for all unitary elements $u\in \cM$.

For any unitary operator   $u\in \cM$, 
we have
\begin{align*}
\mu(\delta_{a,b}(u))= \mu(a-ub u^*) &\,\,\qquad =\qquad   \mu(a_+-a_-  -ub_+u^*+ub_-u^*)\\
&\stackrel{\mbox{\tiny \cite[Thm.3.4(1)]{HanS}}}{\le}
\mu((a_+ + ub_-u^* )\oplus (a_-+ub_+ u^*)) \\
&\,\,\qquad  =\qquad \mu(  \mu(a_+ + ub_-u^*)\oplus \mu(a_-+ub_+ u^*)) .
\end{align*}
By  \cite[Lemma  3.4.4]{LSZ},
we have $ \mu(a_+ + ub_-u^*)\vartriangleleft \mu(a_+) +\mu( ub_-u^* )$ and $\mu(a_-+ub_+ u^*) \vartriangleleft \mu(a_- )+\mu( ub_+ u^*)$. 
Therefore, by Proposition \ref{direct sum}, for any $\varepsilon >0 $,  we have 
\begin{align*} (1-\varepsilon ) \mu(\delta_{a,b}(u))& \vartriangleleft   (\mu(a_+) +\mu( ub_-u^*)) \oplus (\mu(a_- )+\mu( ub_+ u^*))\\
&  =   (\mu(a_+) +\mu(  b_- )) \oplus (\mu(a_- )+\mu(  b_+ )) .
\end{align*}

Since $\varepsilon$ is arbitrarily taken, it follows from \cite[Corollary 3.4.3]{LSZ} that 
$$\norm{\delta_{a,b}(u)}_{E(\cM,\tau)}\le  \norm{(\mu(a_+) +\mu(  b_- )) \oplus (\mu(a_- )+\mu( b_+ )) }_{E( 0,\infty   )}.$$
Therefore, we have
\begin{align*} \norm{\delta_{a,b} }_{\cM\to E(\cM,\tau)}&=
\sup_{u\in U(\cM) } \norm{\delta_{a,b}(u)}_{E(\cM,\tau)}\\
&\le
 \norm{(\mu(a_+) +\mu(  b_- )) \oplus (\mu(a_- )+\mu( b_+ )) }_{E( 0,\infty )}.
\end{align*}
The proof is complete.
\end{proof}
\section{Proof of Theorem \ref{main2}}

In this   section, 
we consider Fialkow's question (see Introduction) in the setting of self-adjoint operators. 

The following lemma  is certainly  known to experts. Due to the lack of a suitable reference, we include a complete proof below. 
\begin{lemma}\label{lemma:decomposition}
Let $\cM$ be an atomless von Neumann algebra equipped with a semifinite faithful normal trace $\tau$.
Let $x\in S(\cM,\tau)$ such that $\mu(x)$ is continuous on $(0,\tau(s|x|))$  and let  $N \ge 2$ be a natural number. 
There exists $0 <\cdots<s_i<s_{i+1} < \cdots < \tau(s(|x|))$, $-\infty <i<\infty $, 
such that 
$$ \mu(s_i;x )  \ge \mu(s_{i+1};x )> \frac{N-1}{N}  \mu(s_i;x) . $$
\end{lemma}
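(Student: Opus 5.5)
Set $T:=\tau(s(|x|))\in(0,\infty]$ and $f:=\mu(x)$ on $(0,T)$. The function $f$ is decreasing, continuous on $(0,T)$ and strictly positive there, since $\mu(t;x)>0$ exactly for $t<T$. The claim is therefore purely one-dimensional. I will build a bi-infinite sequence $s_i\in(0,T)$, strictly increasing, with $s_i\to 0$ as $i\to-\infty$ and $s_i\to T$ as $i\to\infty$, whose consecutive values satisfy $f(s_{i+1})>\frac{N-1}{N}f(s_i)$. The inequality $f(s_i)\ge f(s_{i+1})$ is automatic from monotonicity. The hypotheses that $\cM$ is atomless and that $\mu(x)$ is continuous are used only through this continuity; atomlessness is what makes continuity a natural assumption.

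\emph{Construction.} Fix $r:=\frac{N-1}{N}\in[\frac12,1)$. If all consecutive ratios are controlled, then choosing $s_i$ inside an exhausting sequence of intervals is enough. First choose points $t_k\in(0,T)$, $k\in\mathbb{Z}$, strictly increasing, with $t_k\downarrow 0$ as $k\to-\infty$ and $t_k\uparrow T$ as $k\to\infty$; for example $t_k=T/(1+2^{-k})$ when $T<\infty$, or $t_k=2^k$ otherwise. On each compact interval $[t_k,t_{k+1}]$ the function $f$ is continuous, hence uniformly continuous, and it is bounded below by $f(t_{k+1})>0$. Choose $\delta_k>0$ so that $|f(s)-f(s')|<(1-r)f(t_{k+1})$ whenever $s,s'\in[t_k,t_{k+1}]$ and $|s-s'|\le\delta_k$. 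Now subdivide $[t_k,t_{k+1}]$ into finitely many points $t_k=p_0<p_1<\dots<p_m=t_{k+1}$ with mesh at most $\delta_k$. For consecutive points,
\[
f(p_{j+1})>f(p_j)-(1-r)f(t_{k+1})\ge f(p_j)-(1-r)f(p_j)=rf(p_j),
\]
where the second inequality uses $f(t_{k+1})\le f(p_j)$.

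\emph{Assembly.} Concatenating these finite partitions over all $k\in\mathbb{Z}$ gives a strictly increasing bi-infinite sequence in $(0,T)$, which I reindex by $\mathbb{Z}$ as $(s_i)$. It accumulates only at $0$ and at $T$. Each consecutive pair lies in a single interval $[t_k,t_{k+1}]$, so $f(s_{i+1})>rf(s_i)$ holds for every $i$. Each block contains finitely many points and there are infinitely many blocks in both directions, so the index really runs over all of $\mathbb{Z}$ and $s_i$ tends to $0$ and to $T$ at the two ends.

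The main thing to watch is that continuity holds only on the open interval. The key point is that the strict lower bound must use $f(t_{k+1})>0$ on each compact block; it cannot use a uniform estimate, because $f$ may blow up near $0$ and tend to $0$ near $T$. For this reason I do not attempt a global choice such as $f(s_i)=r^i c$: that choice fails when $f$ has flat pieces, and it also fails when $f$ is bounded, so that its level sets do not exhaust the range. The blockwise uniform-continuity argument avoids both problems.
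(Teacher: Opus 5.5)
Your proof is correct. It rests on the same compactness of compact subintervals of $(0,\tau(s(|x|)))$ as the paper's proof, but you use it differently.

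\textbf{The paper's route.} It covers a compact $[a,b]\subset(0,\tau(s(|x|)))$ by the sets
\[
U_s=\Big\{t\in(s,\tau(s(|x|))):\mu(t;x)>\tfrac{N-1}{N}\mu(s;x)\Big\}.
\]
These are open by right-continuity of $\mu(x)$, and they cover $[a,b]$ by continuity and positivity. The paper extracts a finite subcover and then just says ``take $a\to0$ and $b\to\tau(s(|x|))$''.

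\textbf{Your route.} You apply compactness through uniform continuity on the blocks $[t_k,t_{k+1}]$. The lower bound $\mu(t_{k+1};x)>0$ turns an additive modulus of continuity into the multiplicative ratio condition. You then glue the finite partitions along a fixed exhaustion $(t_k)_{k\in\mathbb{Z}}$.

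\textbf{What your version buys.} It makes explicit two steps the paper leaves to the reader.
\begin{itemize}
\item In your construction every consecutive pair satisfies the ratio bound by design. From a finite subcover one must still check that the ordered centres chain correctly: a centre lying in $[a,b]$ is covered by some $U_{s_j}$ of the subcover with $s_j$ at most the preceding centre, which gives the bound.
\item Your indexing by $\mathbb{Z}$ with $s_i\to0$ and $s_i\to\tau(s(|x|))$ is built in. Finite subcovers for different intervals $[a,b]$ need not be compatible, so the paper's limiting step really needs a block-by-block gluing like yours.
\end{itemize}

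\textbf{What the paper's version buys.} The open-cover formulation is slightly more flexible in principle. Openness needs only right-continuity, and covering needs only $\mu(t;x)>\frac{N-1}{N}\lim_{s\uparrow t}\mu(s;x)$ at each $t$, so it would tolerate small jumps. Uniform continuity needs genuine continuity, but under the lemma's continuity hypothesis this makes no difference.
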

\begin{proof}

For any $0< a<b<\tau(s(|x|))$, we have  
 $$ \mu(a;x)<\infty \mbox{
 and }\mu(b;x) >0.$$ 
 Consider the following cover
\begin{equation*}
 \begin{split}  \Big[
 a, b \Big]  \subset \bigcup_{s\in (0, \tau(s(|x|))] }  \left\{   t \in (s,\tau(s(|x|)) )  : 
        \mu(s;x)  \ge \mu(t;x)> \frac{N-1}{N}  \mu(s; x) 
     \right\}
     \end{split}
\end{equation*}
By the right-continuity of singular value functions\cite[Chapter 3.2]{DPS},   it is an open cover. 
Since $[a, b] $ is compact, it follows that it has a finite cover, i.e., there exist 
finitely many 
$s_i$'s in $(0, \tau(s(|x|))]$ such that  
\begin{equation*}
 \begin{split} 
 \left[
a, b \right] 
 \subset ~ \bigcup_{i }  \left\{   t \in (s_i,\tau(s(|x|)) )  : ~\begin{split}
        \mu(s_i;x)   \ge \mu(t;x)> \frac{N-1}{N}  \mu(s_i;x)  
     \end{split} 
     \right\}.
     \end{split}
\end{equation*}
Taking $a\to 0$ and $b \to \tau(s(|x|))$,
we obtain  $0 <\cdots<s_i<s_{i+1} < \cdots < \tau(s(|x|))$, $-\infty <i<\infty $, 
such that 
$$ \mu(s_i;x )  \ge \mu(s_{i+1};x )> \frac{N-1}{N}  \mu(s_i;x) . $$
 This completes the proof.
\end{proof}

Recall from \cite[Remark 6.5.4]{KR-II}  that 
atomic type $I_{\infty}$ factors   are $*$-isomorphic to $B(\cH)$, where $\cH$ is an infinite dimension Hilbert space. 
 Fialkow's question is naturally placed in the setting of infinite factors. 
Throughout the proof of Theorem ~\ref{main2}, we focus on the atomless infinite factor case. 
The atomic case follows by a similar argument.

The following lemma is the key ingredient of the proof for Theorem \ref{main2}. 
The main tool is the total comparability of projections in a factor\cite[Proposition 6.2.6]{KR-II}.
\begin{lemma}\label{lemma:approx}Let $\cM$ be a    factor equipped with a semifinite infinite faithful normal trace $\tau$ and    $a, b\in S_0(\cM,\tau)$ be self-adjoint elements such that $\mu(a_+), \mu(a_-),\mu(b_+)$ and $\mu(b_-)$ have no discontinuous points. 
Then,  for any integer $N\ge 2$, there exists a partial isometry  $u_N \in \cM$ 
 such that  
 \begin{align*}
 &~\quad  \frac{N-1}N     \sigma_{\frac{ N}{N+1}}  \mu\Big(
(\mu(a_+)+\mu(b_-)) \oplus(\mu(a_-)+\mu(b_+))  \Big)  \\
  &\le  \mu(  a u_N   - u_N b )\\
  & \le \frac{N }{N-1 }\mu\Big(
(\mu(a_+)+\mu(b_-)) \oplus(\mu(a_-)+\mu(b_+))  \Big)     . 
 \end{align*}


\end{lemma}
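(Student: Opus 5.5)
Let $f_1:=\mu(a_+)+\mu(b_-)$ and $f_2:=\mu(a_-)+\mu(b_+)$. The idea is to build $u_N$ so that $au_N-u_Nb$ is, up to controlled error, a block-diagonal operator. On one piece we want $a_+$ to sit ``next to'' $-b_-$ transported by $u_N$. On the other piece we want $-a_-$ to sit next to $b_+$ transported. The heuristic is $a_+ u - u(-b_-)$: if $u$ maps the support of $b_-$ onto a subspace of $s(a_+)$ where $a_+$ and $ub_-u^*$ act ``diagonally'' with comparable spectral levels, then $(a u - u b)$ behaves like $a_+ + ub_-u^*$ on that part, with singular values close to $\mu(a_+)+\mu(b_-)$.

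\textbf{Step 1 (discretisation).} Apply Lemma~\ref{lemma:decomposition} to each of $a_+,a_-,b_+,b_-$ with the given $N$. This cuts their spectra into countably many dyadic-like bands on which $\mu$ varies by a factor at most $N/(N-1)$. Let $p_i=e^{a_+}\big((\mu(s_{i+1};a_+),\mu(s_i;a_+)]\big)$, with trace $s_{i+1}-s_i$ (continuity of $\mu$ and atomlessness ensure this), and similarly $q_j$ for $b_-$. Refine the two partitions to a common one in the $t$-variable, so that matching pieces $p_i', q_i'$ have equal trace. On these pieces $a_+p_i'$ and $b_-q_i'$ are within factor $N/(N-1)$ of the constants $\alpha_i$ and $\beta_i$. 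By comparability of projections in the factor (equal finite trace implies equivalence), choose partial isometries $v_i$ with $v_i^*v_i=q_i'$ and $v_iv_i^*=p_i'$. Do the same for $b_+$ against $a_-$ with $w_i$. If one support has smaller trace, use the remaining part of the infinite factor: the complement of $s(a)$ and of $s(b)$ is infinite, since $a,b\in S_0$ and $\tau$ is infinite. Then set $u_N=\sum_i v_i+\sum_i w_i$ (a strong sum of partial isometries with orthogonal initial and final projections).

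\textbf{Step 2 (computation).} We have $au_N-u_Nb=\sum_i (a_+p_i'v_i+v_iq_i'b_-)-\sum_i(a_-w_i+w_ib_+)$, using $v_i=p_i'v_iq_i'$ and the orthogonality of $a_-$ with $p_i'$. The terms are mutually orthogonal both on the left and on the right. So $|au_N-u_Nb|$ is unitarily a direct sum of pieces with $\mu$ squeezed between $\frac{N-1}{N}(\alpha_i+\beta_i)$ and $\frac{N}{N-1}(\alpha_i+\beta_i)$ in the band. Note $(a_+p_i'v_i+v_ib_-q_i')^*(\cdot)$ is $(v_i^*a_+p_i'v_i + b_-q_i')^2$, a positive sum of two positive operators bounded below and above by band constants. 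Comparing with the step function $\sum(\alpha_i+\beta_i)\chi_{[s_i,s_{i+1})}$ and its analogue for $f_2$ yields pointwise bounds on $\mu$ of the whole operator against $\mu(f_1\oplus f_2)$.

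\textbf{Main obstacle.} The upper bound is straightforward. The lower bound carries the dilation $\sigma_{N/(N+1)}$, which comes from the loss when the step approximation misaligns by one band. This is where I would spend care: the bands of $a_+$ and $b_-$ differ, so after common refinement the ratio control on each piece may degrade. I would choose the partitions with ratio $(N-1)/N$ for each, and accept the product loss absorbed by the factor $\frac{N-1}{N}$ together with the dilation. The rearrangement of a sum of orthogonal pieces is the rearrangement of the union, so the remaining issue is only the boundary effects at $t\to0$ and $t\to\tau(s)$, handled by the two-sided infinite index range in Lemma~\ref{lemma:decomposition}.
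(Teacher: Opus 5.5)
Your band-by-band matching of $a_+$ with $b_-$ and of $a_-$ with $b_+$ is essentially Steps~1--2 of the paper. There, $b_-$ is matched with a top piece $a_+e'$ with $\tau(e')=\tau(s(b_-))$, $a_-$ with a top piece $b_+e$, and common partition points are used. Refining a partition cannot spoil the ratio bound on the sub-pieces, so your worry about degradation is unfounded, and these steps give two-sided bounds with no dilation at all.

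The genuine gap is how you dispose of what is left unmatched. You send it into ${\bf 1}-s(a)$, or take it from ${\bf 1}-s(b)$, claiming these projections are infinite because $a,b\in S_0(\cM,\tau)$ and $\tau({\bf 1})=\infty$. That is false. Membership in $S_0$ constrains $e^{|a|}(\lambda,\infty)$ for $\lambda>0$ but says nothing about $e^{|a|}\{0\}$. For instance, $\mu(t;a)=(1+t)^{-1}$ realised as a multiplication operator inside a copy of $L_\infty(0,\infty)$ in a II$_\infty$ factor gives $s(a)={\bf 1}$.

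The hypotheses allow $a,b\ge 0$ with $s(a)=s(b)={\bf 1}$. Then your construction matches nothing and has nowhere to park anything. Any partial isometry $u$ must put pieces of $b$ on top of pieces of $a$, where they enter $au-ub$ with opposite signs and partially cancel, while the target is $\mu(\mu(a)\oplus\mu(b))$. The same obstruction occurs in the case the paper treats, $\tau(s(b_-))<\tau(s(a_+))=\infty$ and $\tau(s(a_-))<\tau(s(b_+))=\infty$. There the leftover infinite tails $a_+({\bf 1}-e')$ and $b_+({\bf 1}-e)$ must be handled inside $s(a)$ and $s(b)$.

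The missing idea is the paper's Step~3. Cut both tails into unit-trace spectral pieces and pair them so that in every pair one piece dominates the other by a factor $N$, e.g. $\mu(m_k;b_+({\bf 1}-e))\le\frac1N\mu(k+1;a_+({\bf 1}-e'))$. The dominated pieces should form a very sparse family, e.g. $m_{k+1}\ge Nm_k$. Each pair then contributes at least $\frac{N-1}{N}$ of its dominant piece, and discarding a sparse family of unit pieces costs exactly the dilation $\sigma_{\frac{N}{N+1}}$. That, not band misalignment, is where the dilation in the lower bound comes from.

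When you implement this, the pairing must run in both directions, with sparse dominated families on both sides. Then the initial projection of $u_N$ contains all of $s(b)$ and the final projection all of $s(a)$; compare the $P_n,R_n,Q_n,S_n$ matching in the proof of Theorem~\ref{thm2}. In the paper's Step~3 only the $a_+$-tail is paired, onto $\vee_k s(g_k)$. Consequently $u_N=u_1+u_2+v^*$ vanishes on the infinite projection $s(b_+)({\bf 1}-e)-\vee_k s(g_k)$, which commutes with $b$. That part of the $b_+$-tail is annihilated by $u_Nb$ and never enters $au_N-u_Nb$. The closing identity $\mu(u_N^*au_N-b)=\mu(au_N-u_Nb)$, which needs $u_N^*u_N\ge s(b)$, is therefore not valid there, and the lower bound is not obtained as written.

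A minor point: continuity of $\mu(a_+)$ rules out jumps but not plateaus. So $e^{a_+}\big((\mu(s_{i+1};a_+),\mu(s_i;a_+)]\big)$ need not have trace $s_{i+1}-s_i$. Instead, choose projections lying between $e^{a_+}(\lambda,\infty)$ and $e^{a_+}[\lambda,\infty)$ with prescribed trace, which atomlessness allows.
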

\begin{proof}
We only prove the statement for the case  when $\cM$ is atomless. The case when $\cM$ is atomic follows from the same argument. 
There are $4$ possibilities for the traces of supports $s(a_+)$ and $s(a_-)$:  
\begin{enumerate}
  \item $\tau(s(a_+))=\infty$ and $\tau(s(a_-))<\infty $;
    \item $\tau(s(a_+))<\infty$ and $\tau(s(a_-))=\infty $;
  \item  $\tau(s(a_+))=\tau(s(a_-))=\infty$;
  \item $\tau(s(a_+)), \tau(s(a_-))< \infty$.
\end{enumerate}
Below, we provide a proof for the first case. 
Other cases follow from  similar (and even simpler)
arguments.   
For the operator $b$, there are $6$ possible cases:
	\begin{enumerate}
		\item  $\tau(s(b_+)) \le \tau(s(a_-))<\infty $ and $\tau(s(b_-))=\tau(s(a_+))=\infty $; 
		\item  
		$ \tau(s(a_-))< \tau(s(b_+)) <\infty $
	and $\tau(s(b_-))=  \tau(s(a_+))=\infty $;  
		\item $ \tau(s(a_-)) <\tau(s(b_+))= \infty  $
		and $\tau(s(b_-))=  \tau(s(a_+))=\infty $; 
			\item  $  \tau(s(b_+)) \leq  \tau(s(a_-))
		<\infty $
		and $\tau(s(b_-))< \tau(s(a_+))=\infty $;
		
		\item  $ \tau(s(a_-))<\tau(s(b_+))
		<\infty  $
	 and $\tau(s(b_-))< \tau(s(a_+))=\infty $;

		\item  
		$ \tau(s(a_-)) < \tau(s(b_+))=\infty $
		 and $\tau(s(b_-))< \tau(s(a_+))=\infty $.
	\end{enumerate}
  The proofs for all cases are similar. 
Below, we consider the last case.

%

Since $\cM$ is atomless, it follows from \cite[Lemma 3.7.7]{DPS} that there exists  a projection  $e$ in $\cM $ such that 
$$e^{b_+} \Big(\mu\Big( \tau(s(a_-)) ; b_+ \Big),\infty  \Big)
\le 
e
\le e^{b_+}\Big[\mu\Big( \tau(s(a_-)) ; b_+ \Big),\infty  \Big)
$$
with $\tau(e)=\tau(s(a_-))$. 
Note that $e$ commutes with $b_+$.
%

{\bf Step 1.} 
We consider operators $a_-$ and $b_+e$. 
By Lemma \ref{lemma:decomposition}, 
there exists $0 <\cdots<s_j<s_{j+1} < \cdots < \tau(s(a_-))$, $-\infty <j<\infty $, 
such that $$ \mu(s_j;b_+)  \ge \mu(s_{j+1};b_+)> \frac{N-1}{N}  \mu(s_j;b_+) $$
and 
$$       \mu(s_j;a_-)    \ge \mu(s_{j+1};a_-)> \frac{N-1}{N}  \mu(s_j;a_-)  . $$
 Let $\lambda_j:=\mu(s_j;b_+)$ and
 $\lambda_j':=\mu(s_j;a_-)$. 
 In particular, we have 
 \begin{align}\label{lambdaj}
 	\lambda_{j+1} > \frac{N-1}{N}\lambda_j\quad\mbox{ and } \quad \lambda'_{j+1} > \frac{N-1}{N}\lambda'_j .
 \end{align}

Since $\cM$ is atomless
and $a_-,b_+e$ are  positive,
it follows from
 \cite[Lemma 3.7.7(i)]{DPS} 
 (or \cite[Lemma 3.7.8]{DPS}) 
that for  
$\{\lambda_j\}_j$ and  $\{\lambda_j'\}_j$,  
there exist two  sequences 
$\left\{e_j \right\}_{j}$, $\left\{e_j' \right\}_{j}$ of projections in 
$\cM$ such that 
$$  \cdots<  e_j<e_{j+1}  \cdots \mbox{ and }
\tau(e_j) =s_j, ~\forall j,
$$
$$ \cdots<   e_j'< e_{j+1} '  \cdots  \mbox{ and }
\tau(e_j') =s_j,~ \forall j,
$$
  $$e^{b_+e}(\lambda_j,\infty)\le e_j \le 
  e^{b_+e}[\lambda_j,\infty )  \mbox{
  and  }
 e^{a_-}(\lambda_j',\infty)\le e_j' \le 
  e^{a_-}[\lambda_j',\infty ).$$
In particular, we have $e_j b_+e =b_+ee_j $ and $e_j'a_-=a_-e_j'$. 

Since $\cM$ is a factor, it follows from \cite[Corollary 6.2.6]{KR-II} that  there exists a  partial isometry  $u_1\in \cM $ with 
\begin{align}\label{supp}
\mbox{$u_1 u_1^* = s(a_-)$ and $u_1^*u_1 =e$} 
\end{align} such that 
$$u_1e_ju^*_1 =e_j', ~\forall j. $$
Note that 
we have 
\begin{align} \label{ineqx+}
\begin{split}
u_1 \left(\sum_{j } \lambda_{j+1}  (e_{j+1}-e_j)\right)  u^*_1   &\le u _1 b_+e u^*_1  \\
 & \le u_1 \left( \sum_{j  } \lambda_{j }  (e_{j+1}-e_j) \right ) u_1^*   \\
&\stackrel{\eqref{lambdaj}}{<}\frac{N}{N-1}u_1 \left( \sum_{j  } \lambda_{j+1 }  (e_{j+1}-e_j) \right ) u^*_1 
\end{split}
\end{align}
and 
\begin{align}\label{*}
	u_1\left(\sum_{j } \lambda_{j+1}  (e_{j+1}-e_j)\right)  u_1 ^* +a_-  &\notag  \le u_1 b_+e u^*_1  +a_- 
    \\ & \le u_1 \left( \sum_{j  } \lambda_{j }  (e_{j+1}-e_j) \right ) u_1 ^* +a_- . 
	\end{align}
Consequently, 
\begin{align*}\begin{split}
&~\quad \mu\left(
u_1  b_+e u^*_1  +a_- - \left(
 u _1 \left( \sum_{j} \lambda_{j+1}   (e_{j+1}-e_j) \right)  u^*_1  +a_- 
 \right)
  \right) \\
  &\le \mu\left(
  u _1 \left( \sum_{j} (\lambda_{j}-\lambda_{j+1}  ) (e_{j+1}-e_j) \right) u^*_1 
  \right).
  \end{split}
\end{align*} 
 By  \eqref{lambdaj}, we have $\lambda _j -\lambda_{j+1 }< \frac{1}{N}\lambda_j$, therefore, we obtain 
  \begin{align}\label{1N-1}
  \begin{split}
&  \quad    \mu\left(
u_1  b_+e u^*_1  +a_- - \left(
 u _1 \left( \sum_{j} \lambda_{j+1}   (e_{j+1}-e_j) \right)  u^*_1  +a_- 
 \right)
  \right) \\
   &\, \le  \frac 1N \mu\left(
  u _1\left( \sum_{j} \lambda_{j } (e_{j+1}-e_j) \right)  u^*_1
  \right) \\
  & 
  \stackrel{\eqref{ineqx+}}{\le} \frac 1N \frac {N}{N-1}\mu\left(
  u_1\left(\sum_{j} \lambda_{j+1  } (e_{j+1}-e_j)\right)  u_1^*
  \right)  \\
  &
  \stackrel{\eqref{ineqx+}}{\le}  \frac 1{N-1} \mu\left(
  u_1  b_+e u^* _1
  \right) \\
  &\,= 
   \frac 1{N-1} \mu\left(
  u_1 e  b_+e u^* _1
  \right) \\
  &\stackrel{\eqref{supp}}{=} \frac 1{N-1}  \mu(e b_+e)
\\
  &\, = \frac 1{N-1}  \mu(b_+e)
  \end{split}
  \end{align}
and, in particular, we have 
\begin{align}\label{NN-1mu(b)}
	 \mu\left(
	u _1\left( \sum_{j} \lambda_{j } (e_{j+1}-e_j) \right)  u^*_1
	\right) \stackrel{\eqref{1N-1}}{\leq} \frac {N}{N-1}  \mu(b_+e).
\end{align}
Note that 
\begin{align}\label{orthogonal2}
	&\qquad  \qquad \notag\mu\left(
	u_1 \left( \sum_{j}  \lambda_{j}  (e_{j+1}-e_j) \right)  u^*_1  +a_- 
	\right)  \\ &
	\,\qquad =	
	 \mu\left(
\sum_{j}  \lambda_{j}  (e_{j+1}'-e_j')  +a_-
	\right) 
	\nonumber \\ &
	\,\qquad =	
	\mu\left(
 \sum_{j}  \lambda_{j}  (e_{j+1}'-e_j')  +\sum_{j} a_- (e_{j+1}'-e_j')
	\right) 
	 \\
	 &
	\,\qquad=	\mu\left(
	\sum_{j} \left( \lambda_{j} + a_- \right) (e_{j+1}'-e_j')    
	\right) \nonumber \\
	 &\, \qquad= 	\mu\left(
	 \mathop\oplus_{j} \left( \lambda_{j} + a_- \right) (e_{j+1}'-e_j')    
	 \right)\nonumber \\ &\nonumber
	\,\qquad =\mu\left(\mathop\oplus_{j}	\mu\left(
	  \left( \lambda_{j} + a_- \right) (e_{j+1}'-e_j')   
	 \right) \right) \\
	  &\notag 
	  \stackrel{\mbox{\tiny \cite[Prop.3.2.8]{DPS}}}{=}
     \mu\left(\mathop\oplus_{j} 	 \left(
      \lambda_{j} \chi_{[s_{j}, s_{j+1})} + \mu
	 \left(  a_-   \right)   \chi_{[s_{j}, s_{j+1})} \right)  \right)  .
\end{align}
Since $\{\lambda_j\}_j$ is decreasing, it follows that
\begin{align*}	
	&\notag \qquad \mu\left(
	u_1 \left( \sum_{j}  \lambda_{j}  (e_{j+1}-e_j) \right)  u^*_1  +a_- 
	\right) \\ 
    &\notag   \stackrel{\eqref{orthogonal2}}{=} 
    \mu\left(\mathop\oplus_{j} 	 \left(
      \lambda_{j} \chi_{[s_{j}, s_{j+1})} + \mu
	 \left(  a_-   \right)   \chi_{[s_{j}, s_{j+1})} \right)  \right)  
     \\ 
     &\notag  ~ = 
 \oplus_{j} 	 \left(
      \lambda_{j} \chi_{[s_{j}, s_{j+1})} + \mu
	 \left(  a_-   \right)   \chi_{[s_{j}, s_{j+1})} \right) 
\\ &
	\,\,= \left( \sum_{j} \lambda_{j}\chi_{[s_{j}, s_{j+1})}\right) +	\left( \sum_{j}\mu
	\left(  a_-  \right)  
	\chi_{[s_{j}, s_{j+1})} \right) 	
 \\ &
	\,\,= \mu\left(
	\sum_{j}  \lambda_{j}  (e_{j+1}'-e_j')	\right) +\mu \left(\sum_{j} a_-(e_{j+1}'-e_j')\right)\nonumber \\
	& \nonumber\,\,=\mu\left(
	\sum_{j}  \lambda_{j}  (e_{j+1}'-e_j')	\right) +\mu ( a_-)
	.			
	\end{align*}
Similarly, we have \begin{align}\label{orthogonal}
	& \notag \quad  \mu\left(
	u_1 \Big( \sum_{j}  \lambda_{j+1}  (e_{j+1}-e_j) \Big)  u^*_1  +a_- 
	\right) \\ & =
	\mu\left(
	u_1 \Big( \sum_{j}  \lambda_{j+1}  (e_{j+1}-e_j) \Big)  u^{*}_1 
	\right) +\mu(a_-) .
\end{align}	
	Hence,  we have
\begin{equation}\label{first estimate}
\begin{split}
	\allowdisplaybreaks
 &\frac{N-1}{N}\mu(b_+e)+\mu(a_-)
  \stackrel{\eqref{1N-1}}{=}
  \frac{N-1}{N}\mu(u_1  b_+e u^*_1 )+\mu(a_-)\\&
  \stackrel{\eqref{ineqx+}}{<} 
  \frac{N-1}{N}\frac{N}{N-1}\mu\left(
  u_1 \left( \sum_{j}  \lambda_{j+1}  (e_{j+1}-e_j) \right)  u^*_1 
  \right)+\mu(a_-)
  \\&
\, = 
\mu\left(
 u_1 \left( \sum_{j}  \lambda_{j+1}  (e_{j+1}-e_j) \right)  u^*_1 
 \right) +\mu(a_-)
 \\
 &
 \stackrel{\eqref{orthogonal}}{=}\mu\left(
  u_1 \left( \sum_{j}  \lambda_{j+1}  (e_{j+1}-e_j) \right)  u^*_1  +a_- 
  \right)  \\
  &\stackrel{\eqref{*}}{\leq}
  \mu\left(
 u_1  b_+ e u^* _1 +a_- 
  \right)\\
  &\stackrel{\eqref{*}}{\leq} \mu\left(
  u_1 \left( \sum_{j}  \lambda_j  (e_{j+1}-e_j)\right)  u^*  _1 +a_-
  \right)  \\
  &\stackrel{\eqref{orthogonal2}}{=}
  \mu\left(
  u_1 \left( \sum_{j}  \lambda_{j}  (e_{j+1}-e_j) \right)  u^*_1 
  \right) +\mu(a_-)\\
  & \stackrel{\eqref{NN-1mu(b)}}{\le} \frac{N}{N-1}\mu(b_+e)+\mu(a_-). 
  \end{split}
  \end{equation}
In particular, \eqref{first estimate} implies that
  \begin{equation}\label{eq+}\begin{split}&~\quad 
  		\frac{N-1}{N}\sigma_{\frac{N}{N+1}}\mu(b_+e)+\frac{N-1}{N}\sigma_{\frac{N}{N+1}}\mu(a_-)\\
  		&  \le  \frac{N-1}{N}\mu(b_+e)+\mu(a_-) \\
&  		\leq
  		\mu\left(
  		u_1  b_+ e u^* _1 +a_- 
  		\right)\\
  		 & 
  		 \le \frac{N}{N-1}\mu(b_+e)+\mu(a_-) \\
  		& \leq \frac{N}{N-1}\mu(b_+e)+\frac{N}{N-1}\mu(a_-).	
  	\end{split}
  \end{equation}
 
{\bf Step 2.} Let $e'$ be a projection in $\cM$ with $\tau(e') = \tau(s(b_-))$ and  $$e^{a_+} \Big(\mu\Big( \tau(s(b_-)) ; a_+  \Big),\infty  \Big)
\le 
e' 
\le e^{a_+}\Big[\mu\Big( \tau(s(b_-)) ; a_+ \Big),\infty  \Big). 
$$
In particular, we have $a_+ e' =e'a_+$.  
Now we consider operators $a_+e'$ and $b_-$. 
 Arguing similarly as above, we obtain a partial isometry  $u_2 \in \cM $ with $u_2 u_2^* =e'  $ and $u_2^* u_2 =s(b_-)$ such that 
  \begin{align*}
\mu(a_+ e')+ \frac{N-1}{N} \mu(b_-)
 \le  
  \mu\left(
  a_+ e'  +u_2 b_-u_2  ^*  
  \right)        \le  \mu(a_+e' )+ \frac{N}{N-1} \mu(b_-). 
  \end{align*}
In particular, this shows
  	\begin{equation}\label{eq-}\begin{split}
  		&~\quad 	\frac{N-1}{N}\sigma_{\frac{N}{N+1}}\mu(a_+ e')+\frac{N-1}{N}\sigma_{\frac{N}{N+1}}\mu(b_-)\\
  		&
  			\leq
  			 \mu\left(
  			a_+ e'  +u_2 b_-u_2  ^*  
  			\right)    \\&
  			\leq \frac{N}{N-1}\mu(a_+e')+\frac{N}{N-1}\mu(b_-). 
  		\end{split}
  \end{equation}

{\bf Step 3.} Now, we consider $b_+ ({\bf 1} -e )$ and $a_+ ({\bf 1} -e' )$,  both of which  have infinite supports.  
 Since the algebra $\cM$ is atomless, it follows from 
\cite[Lemma 3.7.8]{DPS} 
 that  there exist a sequence $\left\{f_k \right\}_{k\ge 1 }$ of projections in $\cM$
such that $0=f_0 < f_1<f_2<\cdots$, 
$\tau(f_k)=k$
and 
\begin{align*}
    e^{b_+({\bf 1} -e)  }(\mu(k;b_+ ({\bf 1} -e )),\infty ) 
    &\le 
    f_k 
    \le 
    e^{b_+({\bf 1} -e )}  [\mu(k;b_+ ({\bf 1} -e ) ),\infty ) ,
    \\  &
[b_+({\bf 1} -e ), f_k] =0 ,  
\end{align*}
and 
 a sequence $\left\{f_k ' \right\}_{k\ge 1 }$ of projections in $\cM$
such that $0=f'_0< f_1'<f_2'<\cdots$, 
$\tau(f_k' )=k$
and 
\begin{align*}
e^{a_+({\bf 1} -e')  }(\mu(k;a_+  ({\bf 1} -e' )),\infty ) & \le f_k'  \le e^{a_+({\bf 1} -e' )}[\mu(k;a_+ ({\bf 1} -e' ) ),\infty ) ,   \\  &
[a_+({\bf 1} -e' ), f_k'] =0 . 
\end{align*}

Recall that   $a,b\in S_0(\cM,\tau)$. 
Let    $   \{m_k\}_{k\ge 0}   $  
be a sequence  of positive integers such that    $m_{k+1}\ge N m_k$ ($m_0\ge  N$)
and 
\begin{align}\label{assumption}
\frac1N   \cdot  \mu(k +1   ;a _+ ({\bf 1} -e '))  \ge \mu(m_{k} ;b_+  ({\bf 1} -e ) )  	
\end{align}
for all $k\ge 0 $.


For every $k\ge 0$, 
let    $$g_k:=  f_{m_k+1 }-f_{m_k  }, ~g_k':=  f_{k+1 }'-f_{k  }'  \mbox{ with } \tau(g_k)=\tau(g_k')=1 .$$
Since $\cM$ is a factor, it follows from \cite[Corollary 6.2.6]{KR-II} that  there exists a partial isometry   $v$ in $\cM$ with $ vv^*  =\mathop{\vee}\limits_{k\ge 0} s(g_k)  $ and $  v^* v ={\bf 1}-e  ' $ such that 
 $$ v g_{k}' v^* = g_k  \mbox{ and 
  } v ^* g_k  v  =g_{k} ' .  $$

 By the definition of $g_k$'s, we have 
   \begin{align}\label{a(1-e')}
  	\begin{split}
  	 \mu(k+1; a _+ ({\bf 1} -e'))g_{k} \leq va _+ ({\bf 1} -e')v^*g_{k}
  	\leq \mu(k; a _+ ({\bf 1} -e'))g_{k}
  	\end{split}
  \end{align}
  and
  \begin{align*} 
  	\begin{split}
    b_+	g_k 
  		\leq
  		\mu(m_k; b_+({\bf 1}-e))g_k\stackrel{\eqref{assumption}}{\leq}\frac1N    \mu(k +1  ;a _+ ({\bf 1} -e ')) g_k
  	\end{split}
  \end{align*}
  for all $k\geq0$,  and therefore, 
  \begin{align*} 
  	\begin{split}
  		va _+ ({\bf 1} -e')v^*g_k-b_+ g_k
  	& \,\, \geq   va _+ ({\bf 1} -e')v^*g_k  - \frac1N    \mu(k +1  ;a _+ ({\bf 1} -e ')) g_k   \\ &\stackrel{\eqref{a(1-e')}}{\geq}
  	\frac{N-1}{N}va _+ ({\bf 1} -e')v^*g_{k}
  	, 
  	\end{split}
  \end{align*}
 and  so, 
   \begin{align*}
  	\mu 	\left(va _+ ({\bf 1} -e')v^*g_k-b_+g_k\right)
  	 \geq \frac{N-1}N   \mu\left(va _+ ({\bf 1} -e')v^*g_k\right)
  	.
  \end{align*}
Then, we have 
\begin{align}\label{ineqs(gk)}
	\begin{split}
		\mu 	\left(va _+ ({\bf 1} -e')v^*-b_+ 
     ( 
     \mathop{\vee}_{k\ge 0} s(g_k)
     ) 
        \right)&=	
		\mu 	\Big(\mathop\oplus_{k\geq0}
		\left(va _+ ({\bf 1} -e')v^*g_k-b_+g_k
		\right)\Big)  \\ &
		\geq \frac{N-1}N \mu \Big(\mathop\oplus_{k\geq0}	\mu\left(va _+ ({\bf 1} -e')v^*g_k\right)\Big) 
		\\ &
	\geq \frac{N-1}N   \mu\left(va _+ ({\bf 1} -e')v^*\right).
		\end{split}
\end{align}

  Now,
   we consider
 $$ \mu\Big( b_+({\bf 1}-e-\mathop               {\vee}_{k\ge 0} s(g_k)) \Big)=\mu\left(\Big(\chi_{[0, m_0 )}+\sum _{k\geq 1}\chi_{[m_{k-1}+1, m_k)}\Big)\mu(  b_+({\bf 1} -e) )\right).
  $$ 
    Note that 
 \begin{align}\label{m0}
 	\begin{split}
 \Big(\sigma_{\frac{N+1}{N}}	\left(\chi_{[0, m_0)}\mu( b_+({\bf 1} -e) )
 \right)\Big) (s)& 
 \geq  \Big(\sigma_{1+\frac{1}{m_0}}	\left(\chi_{[0, m_0)}\mu( b_+({\bf 1} -e) )
 \right)\Big) (s)
  \\ &
 	 =\chi_{[0,  m_0+1)}(s)\mu\left( \frac{sm_0}{m_0+1}; b_+({\bf 1} -e)\right) 
 	\\ & \geq	\chi_{[0,  m_0+1)}(s)\mu\left( s; b_+({\bf 1} -e)\right) 	.
 \end{split}
 	\end{align}
On the other hand, we have 
\begin{align*}
	\begin{split}
     &~\quad 
\Big(\sigma_{\frac{N+1}{N}}	\left(\chi_{[m_{k-1}+1, m_k)}\mu(  b_+({\bf 1} -e) ) \right) \Big) (s)\\
&\geq \Big(\sigma_{ \frac{m_k-m_{k-1} }{m_k-m_{k-1}-1}}	\left(\chi_{[m_{k-1}+1, m_k)}\mu(  b_+({\bf 1} -e) ) \right) \Big) (s)
\\ &
=\chi_{\left [(m_{k-1}+1) \frac{m_k-m_{k-1} }{m_k-m_{k-1}-1} , m_k \frac{m_k-m_{k-1} }{m_k-m_{k-1}-1} \right)}(s)
  \\ & 
    \quad \cdot 
    \mu\left( \frac{s(m_k-m_{k-1}-1)}{m_k-m_{k-1}}; b_+({\bf 1} -e)\right) 	
	\\ &
	=\chi_{\left [(m_{k-1}+1) \frac{m_k-m_{k-1} }{m_k-m_{k-1}-1}  , (m_{k-1}+1) \frac{m_k-m_{k-1} }{m_k-m_{k-1}-1} + (m_k-m_{k-1 })\right)}(s)
    \\ & 
    \quad \cdot 
    \mu\left( \frac{s(m_k-m_{k-1}-1)}{m_k-m_{k-1}}; b_+({\bf 1} -e)\right) 
,
\end{split}
\end{align*}
and therefore,
by the definition of generalized singular value functions, we have 
\begin{align*}
	\begin{split}&~\quad 
\mu\Big(s; \sigma_{\frac{N+1}{N}}	\left(\chi_{[m_{k-1}+1, m_k)}\mu(  b_+({\bf 1} -e) ) \right) \Big) \\
&\geq    \chi_{\left [0 , m_k-m_{k-1 }\right)}(s)\mu\left( 
m_{k-1}+1+ 
\frac{s(m_k-m_{k-1}-1)}{m_k-m_{k-1}}; b_+({\bf 1} -e)\right) \\
&\geq    \chi_{\left [0 , m_k-m_{k-1 }\right)}(s)\mu\left( 
m_{k-1}+1+ 
s ; b_+({\bf 1} -e)\right) 
\\
&= \mu\left ( s   ;   \chi_{\left [m_{k-1}  +1  , m_k +1 \right)}\mu\left( s; 
 b_+({\bf 1} -e)\right)   \right) 
\end{split}
\end{align*}
for all $k\geq 1$.
 This together with \eqref{m0} implies that  
 \begin{align}\label{sigma1}
   	\sigma_{\frac{N+1}{N}}\mu\Big( b_+({\bf 1}-e-\mathop{\vee}_{k\ge 0} s(g_k)) \Big)
   	\geq \mu( b_+({\bf 1} -e) ).
   \end{align}
   Hence,   we have 
    \begin{align} \label{secondineq}
    \mu\Big( b_+({\bf 1}-e-\mathop{\vee}_{k\ge 0} s(g_k)) \Big)\stackrel{\eqref{sigma1}}{\geq } \sigma_{\frac{N}{N+1}}\mu( b_+({\bf 1} -e) )\geq \frac{N-1}{N} \sigma_{\frac{N}{N+1}}\mu( b_+({\bf 1} -e) ).
 \end{align}
Now, we  
obtain that
\begin{align*}
&\qquad \quad  \frac{N-1}N   \sigma_{\frac{N}{N+1}} \mu  \Big(\mu(a_+ ({\bf 1}-e'  ) )\oplus  \mu(   b _+({\bf 1}-e )  )  \Big)
\\ & \quad \quad\, =\mu\Big(\frac{N-1}N  \sigma_{\frac{N}{N+1}}\mu\left(a _+ ({\bf 1} -e')\right)\oplus \frac{N-1}N
\sigma_{\frac{N}{N+1}}\mu\left( b _+({\bf 1}-e )\right)\Big)
  \\
 &  \quad 
\stackrel{\eqref{ineqs(gk)}, \eqref{secondineq}}{\leq}
 \mu\left( \mu\Big(
 v (a_+ ({\bf 1}-e'  ))v^*  - b_+
 \Big(\mathop{\vee}_{k\ge 0} s(g_k)\Big)
 \Big)
 \oplus
  \mu\Big(
   b_+(
   {\bf 1}-e-
   \mathop{\vee}_{k\ge 0} s(g_k)
   ) 
   \Big)
   \right) \\
   &
 \quad  \quad\, =
   \mu\left(\Big(
   v (a_+ ({\bf 1}-e'  ))v^*  - b_+\Big(\mathop{\vee}_{k\ge 0} s(g_k)\Big)\Big)
   \oplus
 (-  b_+({\bf 1}-e-\mathop{\vee}_{k\ge 0} s(g_k)))  
   \right)\\ 
   & \quad \quad\, =\mu(v  a_+({\bf 1}-e' ) v^*-b_+({\bf 1}-e ))\\
 &  \stackrel{\mbox{\tiny \cite[Thm.3.4(1)]{HanS}}}{\le}
  \mu\Big( \mu
(a_+ ({\bf 1}-e'  )) \oplus
 \mu( b_+({\bf 1}-e ))\Big)
 \\
& \quad  \quad\,\,\le \frac{N }{N-1 }  \mu\Big(\mu(a_+ ({\bf 1}-e'  )) \oplus  \mu(  b_+({\bf 1}-e )  ) \Big).  
\end{align*}
This together with \eqref{eq+} and \eqref{eq-} shows that 
\begin{align*}
&~\quad \frac{N-1}N    \sigma_{\frac{N }{N+1}} \mu\Big(
(\mu(a_+)+\mu(b_-)) \oplus(\mu(a_-)+\mu(b_+))  \Big) 
 \\
&
 \le  \mu\Big( \mu(-u_1^*  a_- u_1 -b_+ e) \oplus \mu(u_2^*  a_+e' u_2+b_- )\oplus \mu(v  a_+({\bf 1}-e' ) v^*-b_+({\bf 1}-e ) ) \Big)
 \\
&  = \mu\left(  (-u_1^*  a_- u_1 -b_+ e) \oplus ( u_2^*  a_+e' u_2+b_- )\oplus (v  a_+({\bf 1}-e' ) v^*-b_+({\bf 1}-e )) \right)
\\
& 
= 
\mu\left( u_N^*  a u_N   - b  \right) \\
& = \mu\left(   a u_N   - u_N b  \right) 
\\
&\le  \frac{N }{N-1 }\mu\Big(
(\mu(a_+)+\mu(b_-)) \oplus(\mu(a_-)+\mu(b_+))  \Big)  , 
\end{align*}
where $u_N = u_1+u_2 +v^* $. 
\end{proof}


Now, we provide a complete proof for Theorem \ref{main2}.
\begin{proof}
Without loss of generality, we may assume that $\mu(a_+)$, $\mu(a_-)$, $\mu(b_+)$ and $\mu(b_-)$    are continuous on $(0,\infty)$.
Indeed, there exists a commutative atomless von Neumann subalgebra $\cA_1$ of $\cM_{s(a_+)}$ such that $a_+\in \cA_1$ and $S(\cA_1,\tau)$ is (trace-measure preserving) $*$-isomorphic to $S (0,\tau(s(a_+)))$  (see e.g. \cite{CS,CKS}).
For any $\delta>0$, there exists a positive function $f$ which is continuous on $(0,\tau(s(a_+)))$ such that $\norm{f-\mu(a_+) }_E<\delta$. 
Hence, there exists a positive operator $a'$ such that $\norm{a'-a_+  }_{E(\cM, \tau)}  <\delta$. 
Therefore, we may assume that $\mu(a_+)$, $\mu(a_-)$, $\mu(b_+)$ and $\mu(b_-)$    are continuous on $(0,\infty)$.

By Lemma \ref{lemma:approx},   for any $N\ge 2$, there exists a partial isometry  $u_N \in \cM$  
 such that  
\begin{align*}
&~\quad 
\frac{N-1}{N+1}\mu\Big ((\mu(a_+) +\mu(  b_- )) \oplus (\mu(a_- )+\mu( b_+ )) \Big) 
\\
&\stackrel{\eqref{dilationuniform}}{\vartriangleleft}   \frac{N-1}N     \sigma_{\frac{  N}{N+1}}  \mu \Big((\mu(a_+) +\mu(  b_- )) \oplus (\mu(a_- )+\mu( b_+ ))\Big)\\
   &\le  \mu(   a u_N   - u_N b ) \\
   &\le \frac{N}{N-1} \mu\Big ((\mu(a_+) +\mu(  b_- )) \oplus (\mu(a_- )+\mu( b_+ )) \Big) . 
 \end{align*}
Since 
$E(0,\infty)$ is a  symmetrically normed space, it follows from \cite[Corollary 3.4.3]{LSZ} that 
\begin{align*}
  &\frac{N-1}{N+1}\norm{ (\mu(a_+) +\mu(  b_- )) \oplus (\mu(a_- )+\mu( b_+ )) }_{E(0,\infty )}  \\& \le 
 \norm{\mu(a u_N  - u_N  b )}_{E(0,\infty )}\\
& =\norm{\delta_{a,b}(u_N ) }_{E(\cM,\tau)} . 
\end{align*}
Since $N$ is arbitrarily taken, it follows from Theorem \ref{upper estimate} that 
$$\norm{\delta_{a,b} }_{\cM\to E(\cM,\tau)}= \norm{(\mu(a_+) +\mu(  b_- )) \oplus (\mu(a_- )+\mu( b_+ ))   }_{E(0,\infty )}. $$
This completes the proof. 
\end{proof}

\begin{rem}

The condition that $\cM$ is an infinite factor can not be relaxed because
 $\norm{\delta_a}=0$ for any operator  $a$ in an abelian von Neumann algebra $\cM$; 
 the condition that $a\in S_0(\cM,\tau)$ is also necessary, e.g., for   a semifinite infinite factor  $\cM$, ${\bf 1}\notin S_0(\cM,\tau)$ and $\norm{\delta_{\bf 1}}=0$.
\end{rem}

\section{Proof of Theorem \ref{thm2}}

The next corollary follows  immediately  from Theorem \ref{upper estimate}.
\begin{cor}\label{normest}Let $\cM$ be a  von Neumann algebra equipped with a semifinite faithful normal trace $\tau$.
Let $a,b\in S(\cM,\tau)
$ be positive operators and
$x\in\cM$. We have
$$\norm{ax-xb}_{E(\cM,\tau)} \le \norm{x}_\cM \norm{a\oplus b}_{E(\cM\oplus \cM,\tau \oplus \tau)}$$
for any symmetrically normed function space $E(0,\infty)$.
\end{cor}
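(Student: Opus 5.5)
We derive Corollary~\ref{normest} from Theorem~\ref{upper estimate}, applied to positive operators.

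First, the case $x=0$ is trivial, so we may assume $x\neq 0$. We may also assume $a\oplus b\in E(\cM\oplus\cM,\tau\oplus\tau)$, since otherwise the right-hand side is $+\infty$. Then $\mu(a)\le\mu(a\oplus b)$ and $\mu(b)\le \mu(a\oplus b)$, so $a,b\in E(\cM,\tau)$ by the symmetry of the norm. Replacing $x$ by $x/\norm{x}_\cM$ and using linearity of $\delta_{a,b}$, it suffices to show
$$\norm{\delta_{a,b}(x)}_{E(\cM,\tau)}\le \norm{a\oplus b}_{E(\cM\oplus\cM,\tau\oplus\tau)}, \qquad \norm{x}_\cM\le 1 .$$
This is the same as bounding the operator norm $\norm{\delta_{a,b}}_{\cM\to E(\cM,\tau)}$.

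Since $a,b\ge 0$ are self-adjoint elements of $E(\cM,\tau)$, Theorem~\ref{upper estimate} applies. Here $a_+=a$, $a_-=0$, $b_+=b$ and $b_-=0$, so
$$\norm{\delta_{a,b}}_{\cM\to E(\cM,\tau)}\le \norm{\mu(a)\oplus\mu(b)}_{E(0,\infty)}.$$

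It remains to identify the right-hand side. The singular value function of the block-diagonal operator $a\oplus b$ with respect to $\tau\oplus\tau$ is the decreasing rearrangement of the disjoint union of $\mu(a)$ and $\mu(b)$. That is, $\mu(a\oplus b)=\mu(\mu(a)\oplus\mu(b))$; this is the identification already used in the proof of Theorem~\ref{upper estimate}, see \cite[Remark 3.2]{HanS}. By the definition of the norm on $E(0,\infty)$ and on $E(\cM\oplus\cM,\tau\oplus\tau)$ we obtain
$$\norm{\mu(a)\oplus\mu(b)}_{E(0,\infty)}=\norm{\mu(a\oplus b)}_{E(0,\infty)}=\norm{a\oplus b}_{E(\cM\oplus\cM,\tau\oplus\tau)} .$$

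The only mildly delicate step is this identification of $\mu(a\oplus b)$, which amounts to the distribution function of $a\oplus b$ being the sum of the distribution functions of $a$ and $b$. Everything else is a direct specialization of Theorem~\ref{upper estimate}.
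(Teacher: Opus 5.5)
Your proposal is correct and is exactly the paper's route: the paper only says the corollary follows immediately from Theorem~\ref{upper estimate}, which you make explicit by setting $a_-=b_-=0$ and identifying $\norm{\mu(a)\oplus\mu(b)}_{E(0,\infty)}$ with $\norm{a\oplus b}_{E(\cM\oplus\cM,\tau\oplus\tau)}$. You also reduce to the case $a\oplus b\in E(\cM\oplus\cM,\tau\oplus\tau)$, which gives $a,b\in E(\cM,\tau)$ so that the theorem applies; this fills in a detail the paper leaves implicit.
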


%

Let $\cM$ be a  von Neumann algebra, and let  $a=a^*\in LS(\cM)$.
The algebra of all self-adjoint elements from the center of the algebra $LS(\cM)$ will be denoted by $Z_h(LS(\cM))$.
  For arbitrary $c,c_1,c_2\in Z_h(LS(\cM))$ such that $c_1\leq c_2$,  the following projections were introduced in  \cite[p.551-552]{BS12b}:
$$e_\cZ^a(-\infty,c):=s((c-a)_+),\ e_\cZ^a(c,+\infty):=s((a-c)_+),$$
$$e_\cZ^a(-\infty,c]:=\textbf{1}-e_\cZ^a(c,+\infty),\ e_\cZ^a[c,+\infty):=\textbf{1}-e_\cZ^a(-\infty,c),$$
$$e_\cZ^a[c_1,c_2):=e_\cZ^a[c_1,+\infty)e_\cZ^a(-\infty,c_2),\ e_\cZ^a(c_1,c_2]:=e_\cZ^a(c_1,+\infty)e_\cZ^a(-\infty,c_2],$$
$$e_\cZ^a[c_1,c_2]:=e_\cZ^a[c_1,+\infty)e_\cZ^a(-\infty,c_2],\ e_\cZ^a(c_1,c_2):=e_\cZ^a(c_1,+\infty)e_\cZ^a(-\infty,c_2),$$
$$e_\cZ^a\{c\}:=e_\cZ^a[c,c].$$ The following sets were also defined there:
$$\Lambda_-=\{c\in Z_h(LS(\cM)): pe_\cZ^a(-\infty,c)\prec pe_\cZ^a(c,+\infty),\ \forall p\in P( Z(\cM)),\ p>0\},$$
$$\Lambda_+=\{c\in Z_h(LS(\cM)): pe_\cZ^a(c,+\infty)\prec pe_\cZ^a(-\infty,c),\ \forall p\in P(Z(\cM)),\ p>0\}.$$
We define an
 element\cite[p.554]{BS12b} (see also \cite{BCLSZ})
 \begin{align*}
 c_a:=\bigvee\Lambda_-\in Z_h (LS(\cM)).
\end{align*}
Note that, if $\cM$ is properly infinite and $a\in S_0(\cM,\tau)$, then $c_a=0$\cite[Corollary B.2]{BHS}. 


The following proposition is an analogue of \cite[Lemma 3.3.7]{LSZ} in terms of uniform submajorization. 
\begin{proposition}\label{directsum sub}
Let $0\le x,y\in L_1(\cM,\tau)+\cM$. We have 
$$\mu(x\oplus y)\vartriangleleft \mu(x+y).$$
\end{proposition}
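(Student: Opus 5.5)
The plan is to realize $x\oplus y$ as the average of two unitary conjugates of a single positive operator whose singular value function equals $\mu(x+y)$. Once that is done, the uniform submajorization follows from the triangle-type inequality for uniform submajorization already used in the proof of Theorem \ref{upper estimate}, namely \cite[Lemma 3.4.4]{LSZ}: $\mu(A+B)\vartriangleleft \mu(A)+\mu(B)$.

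\textbf{Step 1: a row operator.} We work in $M_2(\cM)=\cM\otimes M_2(\mathbb{C})$ equipped with the trace $\tau\otimes {\rm Tr}$. This is consistent with the convention for $\cM\oplus\cM$ and $\tau\oplus\tau$ used in the paper, since the diagonal embedding preserves singular value functions. Set
\begin{align*}
z:=\left(\begin{array}{cc} x^{1/2} & y^{1/2}\\ 0 & 0\end{array}\right)\in S(M_2(\cM),\tau\otimes{\rm Tr}).
\end{align*}
Then $zz^*=(x+y)\oplus 0$, and
\begin{align*}
T:=z^*z=\left(\begin{array}{cc} x & x^{1/2}y^{1/2}\\ y^{1/2}x^{1/2} & y\end{array}\right).
\end{align*}
Since $\mu(z^*z)=\mu(zz^*)$ for any $\tau$-measurable $z$ (see \cite{DPS,LSZ}), we obtain $\mu(T)=\mu((x+y)\oplus 0)=\mu(x+y)$. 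Here we use that $x,y\in L_1+\cM$, so $x+y$ is $\tau$-measurable and the square roots are well defined.

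\textbf{Step 2: pinching.} Let $U={\rm diag}({\bf 1},-{\bf 1})$, a self-adjoint unitary in $M_2(\cM)$. Conjugating by $U$ flips the sign of the off-diagonal entries, so
\begin{align*}
x\oplus y=\tfrac12\left(T+UTU^*\right), \qquad \mu(UTU^*)=\mu(T).
\end{align*}

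\textbf{Step 3: conclusion.} Applying \cite[Lemma 3.4.4]{LSZ} in the algebra $M_2(\cM)$ gives
\begin{align*}
\mu(x\oplus y)=\mu\Big(\tfrac12 T+\tfrac12 UTU^*\Big)\vartriangleleft \tfrac12\mu(T)+\tfrac12\mu(UTU^*)=\mu(T)=\mu(x+y).
\end{align*}

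\textbf{Expected obstacle.} The only delicate point is bookkeeping. We must check that the cited Lemma 3.4.4 applies to operators in the amplified algebra $M_2(\cM)$, which is again semifinite with a faithful normal semifinite trace. We must also check that $T$ and $UTU^*$ lie in $L_1+L_\infty$ there. Both hold because $\mu(T)=\mu(x+y)$ and $x+y\in L_1(\cM,\tau)+\cM$. No continuity assumptions on $\mu(x)$ or $\mu(y)$ are needed.
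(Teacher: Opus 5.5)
Your proof is correct, but it takes a different route from the paper's.

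The paper works only with rearrangement inequalities. From $x+y\prec\prec\mu(x)+\mu(y)\prec\prec 2\sigma_{1/2}\mu(x\oplus y)$ it obtains $\int_0^a\mu(s;x+y)\,ds\le\int_0^{2a}\mu(s;x\oplus y)\,ds$. It combines this with the reverse Hardy--Littlewood submajorization $\int_0^b\mu(s;x\oplus y)\,ds\le\int_0^b\mu(s;x+y)\,ds$ from \cite[Lemma 3.3.7]{LSZ}. Subtracting the two inequalities gives $\vartriangleleft$ with the explicit constant $\lambda=2$.

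You instead write $x\oplus y=\tfrac12(T+UTU^*)$ as the pinching of $T=z^*z$, which is equimeasurable with $(x+y)\oplus 0$. You then apply the triangle inequality for uniform submajorization, \cite[Lemma 3.4.4]{LSZ}, inside $M_2(\cM)$; this is the same tool the paper uses in Theorem \ref{upper estimate}. The matrix computations, the identity $\mu(z^*z)=\mu(zz^*)$, and the identification of singular value functions in $\cM\oplus\cM$ with those in $M_2(\cM)$ are all fine.

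The trade-off is as follows. The paper's argument needs no amplification and only Hardy--Littlewood submajorization. However, it depends on the special fact that here submajorization holds in both directions up to a dilation, which is what makes the subtraction trick work. Your argument is more structural. It is essentially the standard pinching proof of \cite[Lemma 3.3.7]{LSZ} with $\prec\prec$ upgraded to $\vartriangleleft$. It shows the proposition is simply the statement that pinching (a finite average of unitary conjugates) is a contraction for uniform submajorization. The price is that you work in $M_2(\cM)$, and the constant $\lambda$ is whatever Lemma 3.4.4 supplies.
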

\begin{proof}
 We have
$$
x+y\stackrel{\text{\cite[Thm.3.3.3]{LSZ}}}{\prec\prec}\mu(x)+\mu(y)
\stackrel{\text{ \cite[Thm.3.3.4]{LSZ}}} {\prec\prec} 2\sigma_{1/2}\mu(x\oplus y).$$
In other words, 
$$
\int_0^a\mu(s;x+y)ds\leq \int_0^{2a}\mu(s;x\oplus y)ds,~a>0.$$
On the other hand, by \cite[Lemma 3.3.7]{LSZ}, we have
$\int_0^b\mu(s;x\oplus y)ds\leq \int_0^b\mu(s;x+y)ds,~b>0.$
Subtracting these inequalities, we obtain
$$\int_{2a}^b\mu(s;x\oplus y)ds \leq \int_a^b\mu(s;x+y)ds.$$
The proof is complete. 
\end{proof}
Note that the above proposition fails for infinite direct sum, that is, 
there are $0\le x_1,x_2 \cdots \in L_1(\cM,\tau)$ such that 
$\mu(\oplus_{i=1}^\infty x_i)\not\vartriangleleft \mu(\sum _{i=1}^\infty x_i )$ (for example, let 
$x_i=2^{-i}\chi_{[0,1)} \in L_1(0,\infty)$.
 This example was essentially given in \cite[p.120]{Kalton_S}).

The following lemma is an analogue of   the main result of \cite{BHS} (in the special case of $E(0,\infty)=L_1(0,\infty)$) for positive operators. 
\begin{cor}
Let $E(0,\infty)$ be a  symmetrically normed function space.
Let
 $\cM$ be a   von Neumann algebra equipped with a semifinite faithful normal trace and let $a \in E(\cM,\tau)$ be a positive operator.
Then,
\begin{align}\label{===}
\norm{a\oplus a}_{E(\cM\oplus \cM,\tau \oplus \tau)}\ge \norm{\delta_{a } }_{\cM\to  E(\cM,\tau)}  \ge \norm{(a-c_a)\oplus (a-c_a) }_{E(\cM\oplus \cM,\tau \oplus \tau)}.
\end{align}

\end{cor}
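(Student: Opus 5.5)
The upper bound in \eqref{===} is immediate: it is Corollary~\ref{normest} with $b=a$, after taking the supremum over $x\in\cM$ with $\norm{x}_\cM\le 1$. The substance is the lower bound. I plan to produce, for each $\varepsilon>0$, a unitary $u\in U(\cM)$ with
$$(1-\varepsilon)\norm{(a-c_a)\oplus(a-c_a)}_{E(\cM\oplus\cM,\tau\oplus\tau)}\le \norm{[a,u]}_{E(\cM,\tau)} ,$$
and then let $\varepsilon\to 0$.

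\textbf{Step 1: choose the unitary.} I will invoke the commutator estimates of \cite{BS2012,BS12b}, which are built around the central element $c_a=\bigvee\Lambda_-$. For a self-adjoint $a\in LS(\cM)$ and $\varepsilon>0$, these estimates should give a (self-adjoint) unitary $u\in\cM$ with the operator inequality
$$|[a,u]|\ \ge\ (1-\varepsilon)\bigl(|a-c_a| + u^*|a-c_a|u\bigr).$$
Heuristically, $u$ swaps a piece of the spectral projection of $a$ above $c_a$ with a piece below it, of comparable size; the existence of such pieces is exactly what the definition of $\Lambda_\pm$ encodes. I expect this to be the main obstacle: I need the estimate in precisely this "two-term" form, rather than the weaker $|[a,u]|\ge(1-\varepsilon)|a-c_a|$ that is often quoted. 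If only the weaker form is available in the literature, I would redo the construction of \cite{BS12b}. That construction uses the projections $e^a_\cZ(-\infty,c)$ and $e^a_\cZ(c,\infty)$ together with partial isometries between equivalent subprojections. I would check that the same partial isometries produce a sum $p_1+p_2$ of two orthogonal, unitarily equivalent contributions, and that each of them dominates the corresponding piece of $|a-c_a|$.

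\textbf{Step 2: pass to singular values.} Set $x:=|a-c_a|$ and $y:=u^*|a-c_a|u$. If $a-c_a\notin S(\cM,\tau)$, the right-hand side of \eqref{===} is interpreted as $+\infty$, and the same argument shows that $\delta_a$ is unbounded. So assume $a-c_a\in S(\cM,\tau)$. Then $x,y\ge 0$, $\mu(y)=\mu(x)$, and $x,y\in L_1(\cM,\tau)+\cM$. Indeed, since $[a,u]=[a-c_a,u]\in E(\cM,\tau)$ and $E\subset L_1+L_\infty$, the bound from Step 1 gives $x+y\in L_1+\cM$, and then $x,y\in L_1+\cM$ because $0\le x,y\le x+y$. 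From the operator inequality of Step 1 we get $(1-\varepsilon)\mu(x+y)\le\mu([a,u])$.

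Proposition~\ref{directsum sub} gives $\mu(x\oplus y)\vartriangleleft\mu(x+y)$. Because $\mu(y)=\mu(x)$, we have $\mu(x\oplus y)=\mu(x\oplus x)=\mu\bigl((a-c_a)\oplus(a-c_a)\bigr)$. Combining,
$$(1-\varepsilon)\,\mu\bigl((a-c_a)\oplus(a-c_a)\bigr)\vartriangleleft \mu([a,u]).$$

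\textbf{Step 3: pass to norms.} By \cite[Corollary 3.4.3]{LSZ}, uniform submajorization yields the norm inequality in any symmetrically normed space, without any Fatou property or separability assumption. Hence
$$(1-\varepsilon)\norm{(a-c_a)\oplus(a-c_a)}_{E}\le\norm{[a,u]}_{E(\cM,\tau)}\le\norm{\delta_a}_{\cM\to E(\cM,\tau)}.$$
Letting $\varepsilon\to0$ gives the lower bound in \eqref{===}.
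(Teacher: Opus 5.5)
Your proposal is correct and follows the paper's proof essentially step for step: the upper bound from Corollary~\ref{normest}; a self-adjoint unitary $u_\varepsilon$ with $|\delta_a(u_\varepsilon)|\ge(1-\varepsilon)(|a-c_a|+u_\varepsilon|a-c_a|u_\varepsilon)$; then Proposition~\ref{directsum sub} and \cite[Corollary 3.4.3]{LSZ}. The only difference is the source of the two-term estimate, which the paper takes directly from \cite[Corollary B.3]{BHS} rather than from \cite{BS12b}; the paper also notes that this estimate forces $a-c_a\in E(\cM,\tau)$, so your case $a-c_a\notin S(\cM,\tau)$ never actually occurs.
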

\begin{proof}
   By \cite[Corollary B.3]{BHS}, for any $\varepsilon>0$,
   there exists $u_\varepsilon=u_\varepsilon^* \in U(\cM )$ such that  
   $$|\delta_{a }(u_\varepsilon)| \ge (1-\varepsilon)\left( |a-c_a |  + u_\varepsilon |a- c_a |u_\varepsilon \right).$$
In particular, we have $a-c_a\in E(\cM,\tau)$. 
   By Proposition \ref{directsum sub}, we have
   $$\mu((a-c_a) \oplus (a-c_a)) \vartriangleleft \left( |a-c_a|  + u_\varepsilon |a-c_a|u_\varepsilon \right).$$
   Hence, we have 
    $$(1-\varepsilon) \mu((a-c_a) \oplus (a-c_a)    )  \vartriangleleft \mu\left( \left| \delta_{a-c_a  }(u_\varepsilon) \right|\right)
.   
$$
  Since  $a$ is a positive operator in $E(\cM,\tau)$ and $E(0,\infty)$ is a  symmetrically normed  space, it follows that 
\begin{align*}
\norm{ a \oplus a  }_{E(\cM\oplus \cM,\tau \oplus \tau)} 
& ~ \stackrel{\rm Cor. \ref{normest}}{ \ge}  \norm{\delta_{a}}_{\cM \to E(\cM,\tau)}\\
&~\quad =~ \norm{\delta_{a-c_a }}_{\cM \to E(\cM,\tau)}\\
&~\quad \ge~ (1-\varepsilon) 
 \norm{\mu((a-c_a) \oplus (a-c_a))}_{E(0,\infty )} \\
 &  ~ \quad = ~ (1-\varepsilon) \norm{(a-c_a)\oplus (a-c_a)}_{E(\cM\oplus \cM,\tau \oplus \tau)}
.
\end{align*}
This completes the proof.
\end{proof}

Even though the above result holds for general semifinite von Neumann algebras, one can not apply it to a generalized derivation.

We begin the proof of Theorem~\ref{thm2} by establishing several auxiliary
lemmas.  The first one is a fundamental result for comparison of projections. 

\begin{lemma}\cite[Lemma 21(iv)]{BS12b}\label{lemma equ} 
For a properly infinite projection $p$ in a von Neumann algebra and a finite projection $r\le p$, we have
$$p\sim p-r. $$
\end{lemma}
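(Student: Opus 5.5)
The statement is Lemma \ref{lemma equ}: if $p$ is a properly infinite projection and $r\le p$ is finite, then $p\sim p-r$. I will prove it by reducing to a Murray--von Neumann absorption argument. All projections live in the corner $p\cM p$, which is a properly infinite von Neumann algebra with unit $p$. So we may assume $p={\bf 1}$ and $\cM$ is properly infinite.

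\textbf{Step 1: central decomposition.} Let $z$ be the central support of $r$. The claim is local over central projections: it suffices to produce, for a family of orthogonal central projections $\{z_i\}$ with $\sum z_i={\bf 1}$, equivalences $z_i\sim z_i(1-r)$, and then add the partial isometries. On ${\bf 1}-z$ we have $r({\bf 1}-z)=0$, so that part is trivial. We may therefore assume $c(r)={\bf 1}$.

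\textbf{Step 2: halving the unit.} Since $\cM$ is properly infinite, the halving lemma (Kadison--Ringrose 6.3.3) gives projections ${\bf 1}=q_1+q_2$ with $q_1\sim q_2\sim{\bf 1}$. Iterating, ${\bf 1}$ splits into countably many mutually orthogonal projections $\{f_n\}_{n\ge 1}$, each equivalent to ${\bf 1}$.

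\textbf{Step 3: comparison with $r$.} I want to show $r\precsim f$ for some $f\le {\bf 1}-r$ with $f\sim{\bf 1}$; this is the main obstacle. By comparability, take a central projection $w$ with $rw\precsim({\bf 1}-r)w$ and $({\bf 1}-r)({\bf 1}-w)\precsim r({\bf 1}-w)$.

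On ${\bf 1}-w$, the projection ${\bf 1}-w=r({\bf 1}-w)+({\bf 1}-r)({\bf 1}-w)$ is a sum of two projections each $\precsim r({\bf 1}-w)$. So ${\bf 1}-w$ is dominated by $r({\bf 1}-w)\oplus r({\bf 1}-w)$. A direct sum of two finite projections is finite (Kadison--Ringrose 6.3.8), which would make the properly infinite ${\bf 1}-w$ finite. Hence $w={\bf 1}$, and $r\precsim {\bf 1}-r$.

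Now ${\bf 1}-r$ is itself properly infinite: if $q({\bf 1}-r)$ were finite for a nonzero central $q$, then $q={\bf 1}\cdot q$ would be a sum of two finite projections, hence finite, a contradiction. Applying Step 2 inside $({\bf 1}-r)\cM({\bf 1}-r)$ gives ${\bf 1}-r=g_1+g_2$ with $g_1\sim g_2\sim {\bf 1}-r$. Then $r\precsim {\bf 1}-r\sim g_1$.

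\textbf{Step 4: absorption.} We have ${\bf 1}-r\ge g_1+g_2$ with $g_2\sim {\bf 1}-r$ and $r\precsim g_1$. Hence
\[
{\bf 1}=r+({\bf 1}-r)\precsim g_1+g_2={\bf 1}-r\le {\bf 1}.
\]
The Schröder--Bernstein theorem for projections (Kadison--Ringrose 6.2.4) gives ${\bf 1}\sim{\bf 1}-r$. Translating back to the corner, this is $p\sim p-r$.

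Since Step 3 already yields properly infiniteness of ${\bf 1}-r$ directly, the central decomposition in Step 1 is only a convenience; the key use of finiteness of $r$ is the argument excluding $w\ne{\bf 1}$.
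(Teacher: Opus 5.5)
Your proof is correct. The paper itself does not prove this lemma: it is quoted from \cite[Lemma 21(iv)]{BS12b} and used only once, in Lemma~\ref{lem:cofinite-dominates}. So you are not following a proof in the paper; you are giving a self-contained argument from standard Murray--von Neumann theory in place of the citation. The ingredients are:
\begin{enumerate}
\item comparability, to get $r\preccurlyeq {\bf 1}-r$ (a nonzero central piece on which ${\bf 1}-r$ is dominated by $r$ would be finite);
\item proper infiniteness of ${\bf 1}-r$;
\item the halving lemma;
\item the absorption ${\bf 1}=r+({\bf 1}-r)\preccurlyeq g_1+g_2={\bf 1}-r$, closed off by Schr\"oder--Bernstein.
\end{enumerate}
The citation buys brevity. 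Your route removes the dependence on \cite{BS12b} at this point and shows exactly where each hypothesis enters. Finiteness of $r$ is used only in Step 3. Proper infiniteness is used to kill ${\bf 1}-w$, to make ${\bf 1}-r$ properly infinite, and for the halving.

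Three places deserve one more sentence each.
\begin{itemize}
\item In Step 3, ``${\bf 1}-w$ is dominated by $r({\bf 1}-w)\oplus r({\bf 1}-w)$'' is not a statement inside $\cM$. Say instead that $({\bf 1}-r)({\bf 1}-w)$ is equivalent to a subprojection of the finite projection $r({\bf 1}-w)$, hence is finite. Then the central projection ${\bf 1}-w=r({\bf 1}-w)+({\bf 1}-r)({\bf 1}-w)$ is an orthogonal sum of two finite projections, hence finite, hence $0$.
\item In Step 4, the subequivalence of sums uses orthogonality on both sides together with additivity of equivalence. If $r\sim g_1'\le g_1$, then ${\bf 1}=r+({\bf 1}-r)\sim g_1'+g_2\le {\bf 1}-r$.
\item The passage to $p\cM p$ needs that $r$ stays finite in the corner and that $Z(p\cM p)=Z(\cM)p$. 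Alternatively, you can skip both the corner and Step 1. Run the same argument with ${\bf 1}$ replaced by $p$, comparing $r$ with $p-r$ via a central projection of $\cM$. Then $p({\bf 1}-w)$ is finite, hence $0$ because $p$ is properly infinite, and everything else goes through verbatim.
\end{itemize}
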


We then derive the following consequence. 

\begin{lemma}
\label{lem:cofinite-dominates}
Suppose that $\cM$ is  properly infinite.  If $g,e\in\cP(\cM)$
and ${\bf 1}-e$ is finite, then $g\preccurlyeq e$.
\end{lemma}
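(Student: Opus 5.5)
The plan is to deduce the statement from Lemma \ref{lemma equ}, applied to the unit and the finite projection ${\bf 1}-e$, combined with the comparison theorem.

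Since $\cM$ is properly infinite, the identity ${\bf 1}$ is a properly infinite projection. The projection $r:={\bf 1}-e$ is finite by hypothesis and satisfies $r\le {\bf 1}$. Lemma \ref{lemma equ} therefore gives ${\bf 1}\sim {\bf 1}-r=e$. So there is a partial isometry $w\in\cM$ with $w^*w={\bf 1}$ and $ww^*=e$.

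Now take any $g\in\cP(\cM)$ and set $v:=wg$. Then
\[
v^*v=gw^*wg=g,\qquad vv^*=wgw^*\le ww^*=e.
\]
The inequality $wgw^*\le ww^*=e$ holds because $g\le {\bf 1}$. Hence $g\sim wgw^*\le e$, which is exactly $g\preccurlyeq e$.

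The only point needing care is that Lemma \ref{lemma equ} is applied with $p={\bf 1}$. This requires ${\bf 1}$ to be properly infinite, meaning $z{\bf 1}=z$ is infinite for every nonzero central projection $z$. That is precisely the definition of $\cM$ being properly infinite, so no further argument should be needed. There is no real obstacle here; the statement is a direct corollary. I would double-check only that the paper's notation $\preccurlyeq$ means Murray--von Neumann subequivalence, i.e. $g\sim g'\le e$ for some projection $g'$, and not the central-cut version used in the definitions of $\Lambda_\pm$.
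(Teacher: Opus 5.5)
Your proof is correct and matches the paper's: Lemma \ref{lemma equ} with $p={\bf 1}$ and $r={\bf 1}-e$ gives ${\bf 1}\sim e$, and then $g\le{\bf 1}$ yields $g\preccurlyeq e$. The only difference is that you check the last step directly with the partial isometry $wg$, whereas the paper cites a standard comparison fact for it.
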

\begin{proof}

    By Lemma \ref{lemma equ}, we have 
    $$g\le  {\bf 1} \sim {\bf 1}-({\bf 1}-e)=e.$$ 
    The proof is complete (see e.g. \cite[Proposition 1.14.6]{DPS}). 
\end{proof}


We include the following fact  for later reference. 

\begin{lemma}\cite[Lemma 35]{BS12b} \label{lem:selection} 
Let $\cM$ be a von Neumann algebra, let self-adjoint operator $a\in LS(\cM)$ and let $p,q\in\cP(\cM)$ satisfy $p\succcurlyeq  q$.  Assume that one of the
following conditions holds:
\begin{enumerate}
\item
$q$ is finite, and there exists a non-decreasing  sequence $(p_n)_{n\geq1}$ of
finite projections in $\cM$ such that
\[
p_n\uparrow p
\qquad\text{and}\qquad
ap_n=p_na, 
\quad n\geq1;
\]

\item 
$q$ is properly infinite and
$ 
ap=pa\in\cM.
$
\end{enumerate}
Then there exists a projection $q_1\leq p$ such that
\[
q_1\sim q
\qquad\text{and}\qquad
aq_1=q_1a.
\]
\end{lemma}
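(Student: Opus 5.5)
The plan is to reduce both cases to one principle. Inside a reduced algebra where $a$ acts as an element affiliated with it, we look for subprojections of $p$ in the relative commutant $\{a\}'$, and then adjust their equivalence classes using the comparison theorem. Throughout, let $\cA$ be a maximal abelian $*$-subalgebra of $p\cM p$ that contains all spectral projections of $ap$ (equivalently of $pa$, since $ap=pa$ in case (2), and since $ap_n=p_na$ in case (1)). Every projection of $\cA$ commutes with $a$. So it suffices to find $q_1\in P(\cA)$ with $q_1\sim q$.

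\textbf{Case (1).} Put $r_1=p_1$ and $r_n=p_n-p_{n-1}$. These are pairwise orthogonal finite projections commuting with $a$, and $\sum_n r_n=p$. In each finite algebra $r_n\cM r_n$, take a masa $\cA_n$ containing the spectral projections of $ar_n$, and let $T_n$ be its centre-valued trace.

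\emph{First key step.} We want to show that for every projection $f\le r_n$ there is a projection $f'\in\cA_n$ with $f'\sim f$. The first approach is the standard fact that in a finite von Neumann algebra equivalence is decided by the centre-valued trace. Combining this with the halving/continuity properties of $T_n$ on $\cA_n$ over the non-atomic part, and with the type $I$ structure on the atomic part, should give this step.

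\emph{Second key step.} Next we glue. By the comparison theorem and a maximality (Zorn) argument, we decompose $q=\sum_n q^{(n)}$ into orthogonal pieces with $q^{(n)}\preccurlyeq r_n$. To do this, exhaust $q$ against $r_1$, then exhaust the remainder against $r_2$, and so on. Because $q\preccurlyeq p=\sum r_n$ and $q$ is finite, the remainder tends to $0$; here one uses that the relation ``$\preccurlyeq$'' passes to increasing unions of finite projections. Then choose $q_1^{(n)}\in\cA_n$ with $q_1^{(n)}\sim q^{(n)}$ and set $q_1=\sum_n q_1^{(n)}$. This $q_1$ commutes with $a$ and is equivalent to $q$ by additivity of equivalence over orthogonal families.

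\textbf{Case (2).} Here $ap$ is bounded, so $\cA$ is well defined in $p\cM p$. Let $z$ be the central support of $q$.

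\emph{Reduction.} Since $q$ is properly infinite and $q\preccurlyeq p$, the projection $pz$ has properly infinite part dominating $q$. Using a central decomposition, we may reduce to the situation where $q$ and $pz$ are both properly infinite with the same central support. We may also assume countable decomposability, by a further central cut into pieces on which $q$ is a countable sum of mutually equivalent cyclic-type projections.

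\emph{Construction.} Partition the spectrum of $ap$ into countably many Borel pieces. For each central part, pick those spectral projections of $ap$ whose cut by $z$ is properly infinite; at least one exists, since a finite union of pieces with finite cuts cannot exhaust a properly infinite projection. Then use the halving lemma inside the abelian algebra generated by $\cA$ together with these spectral projections. This produces a projection $q_1\le pz$ in $\cA$ that is properly infinite with central support $z$ and has the right ``size''. Finally, invoke the fact that two properly infinite projections with the same central support and the same local countable-decomposability type are equivalent, which gives $q_1\sim q$.

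\textbf{Main obstacle.} I expect the hardest part to be case (2). Equivalence of properly infinite projections is not controlled by a trace, so the matching of ``sizes'' (the cardinality/type data over the centre) must be done carefully. I would first try the crude choice $q_1=pz'$ for a suitable central projection $z'$, which obviously commutes with $a$. If $q\sim pz'$ fails on some central piece, I would then correct that piece by halving $pz'$ inside $\cA$.
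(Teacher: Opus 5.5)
\textbf{Case (2) does not go through.} The paper quotes this lemma from Ber--Sukochev without proof, so your argument has to stand on its own. Reducing to ``find $q_1\in P(\cA)$ with $q_1\sim q$'' is only a sufficient condition. For an arbitrary masa $\cA\subseteq p\cM p$ containing the spectral projections of $ap$, it can be unsatisfiable. Take $a=0$, $p={\bf 1}$, $\cM=B(\cH)$ with $\cH=L^2(\{0,1\}^\kappa,\nu)$ for an uncountable $\kappa$ and $\nu$ the product measure, $\cA=L^\infty(\nu)$ acting by multiplication, and $q$ of rank $\aleph_0$. Every nonzero projection of $\cA$ is $\chi_F$ with $\nu(F)>0$, and by Maharam homogeneity $L^2(F)$ has dimension $\kappa$. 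So nothing in $\cA$ is equivalent to $q$, and no halving inside $\cA$ produces the ``right size''.

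Your auxiliary steps do not rescue this. $B(\cH)$ is a factor, so no central cut makes $q$ countably decomposable. Outside the countably decomposable case, properly infinite projections with equal central support need not be equivalent; here the invariant is the rank cardinal. Choosing a ``good'' masa would presuppose the very $q_1$ you are looking for.

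Case (2) in fact needs no masa. Pick $q'\le p$ with $q'\sim q$ and a norm-dense sequence $(c_n)$ in the separable algebra $C^*(ap,{\bf 1})\subseteq\cM$. Put $q_1=\bigvee_n l(c_nq')$, the projection onto $\overline{C^*(ap,{\bf 1})q'\cH}$. Then $q_1\le p$ and $q_1$ commutes with $ap$, hence $aq_1=(ap)q_1=q_1(ap)=q_1(pa)=q_1a$; also $q\preccurlyeq q_1$. With $g_n=\bigvee_{k\le n}l(c_kq')$, Kaplansky's parallelogram law gives $g_n-g_{n-1}\preccurlyeq l(c_nq')\sim r(c_nq')\le q'$. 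Writing the properly infinite $q$ as $\sum_n q^{(n)}$ with orthogonal $q^{(n)}\sim q$ gives $q_1=\sum_n(g_n-g_{n-1})\preccurlyeq q$, and Schr\"oder--Bernstein yields $q_1\sim q$.

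\textbf{Case (1): the gluing is fine, but the first key step is only asserted.} The remainder vanishes for a reason other than the one you give. On the central part $c_\infty$ where the exhaustion never terminates, $qc_\infty$ contains a subprojection equivalent to $pc_\infty\succcurlyeq qc_\infty$, and finiteness of $q$ forces equality; no property of increasing unions is involved. Your ``first key step'', however, carries the entire difficulty. That every projection of the finite algebra $r_n\cM r_n$ is equivalent to a projection in an arbitrary masa $\cA_n$ is true, but it is a genuine theorem:
\begin{itemize}
\item On the type $\mathrm{I}_m$ summands you need that a masa contains $m$ orthogonal equivalent abelian projections summing to the unit of the summand. This is Kadison's nontrivial result on diagonalizing matrices over abelian algebras.
\item On the type $\mathrm{II}_1$ summand you need that $\cA_n$ has no atoms relative to the centre. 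A relative atom $e$ would make $\cA_ne$ a masa equal to the centre of $e\cM e$, so $e\cM e$ would be abelian.
\item On that summand you also need a Lyapunov-type exhaustion argument showing that the centre-valued trace $T_n$ maps $P(\cA_n)$ onto all admissible values, so that $T_n(f)$ is attained exactly.
\end{itemize}
``Halving/continuity properties'' and ``type $I$ structure on the atomic part'' do not supply this; you must prove it or cite it.
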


Now, we are ready to present the proof of Theorem \ref{thm2}.

\begin{proof}[Proof of Theorem \ref{thm2}]
Fix 
	$0<\varepsilon<1$.  
	We inductively construct non-increasing sequences of projections
	$(r_n^a)_{n\geq0}$ and $(r_n^b)_{n\geq0}$ in $\cM$, starting with
	$r_0^a=r_0^b={\bf 1}$, such that 
	\begin{align}\label{rn1bb=brn1b}
		r_n^a a=a r_n^a,  \quad r_n^b b= b r_n^b, \quad \tau({\bf 1}-r_n^a)<\infty \, 
		\text{ and }  \, \tau({\bf 1}-r_n^b)<\infty . 
	\end{align}

	Suppose that the non-zero projections  $r_{n-1}^a,r_{n-1}^b \in \cM$ have been constructed and  they satisfy \eqref{rn1bb=brn1b}. 
We recall that	two  self-adjoint
	operators $x, y\in S(\cM, \tau)$ commute  if and only if $xe^y (\delta) = e^y (\delta)  x$  for all Borel sets  $\delta\subseteq\mathbb R$ 
	\cite[Proposition 2.2.22]{DPS}. 
	Define  
	\begin{align}\label{eq:Pn}
		P_n:=r_{n-1}^a e^a(2^{-n},\infty), 
		\quad 
			L_n^b:=r_{n-1}^b e^b[0, 2^{-n} \varepsilon].  
	\end{align} 
	By the fact that $	a r_{n-1}^a \stackrel{\eqref{rn1bb=brn1b}}{=} 
	r_{n-1}^a a$, we deduce that   $P_n$ is a 
 projection in $\cM$ and 
	\begin{align} \label{aPncommute}
	&\nonumber 	aP_n
	 \,\,\, \stackrel{\eqref{rn1bb=brn1b}}{=} 
		P_n a \stackrel{\eqref{rn1bb=brn1b}}{=} 
		 	r_{n-1}^a (a e^a(2^{-n},\infty)) r_{n-1}^a
		\\ &	\stackrel{\text{ \cite[Prop.2.2.24]{DPS}}}{\ge}  r_{n-1}^a\, 
		\left(
		 2^{-n} \cdot  e^a(2^{-n},\infty)
		\right) 
		\, r_{n-1}^a   
		\stackrel{\eqref{eq:Pn}}{=}  2^{-n} P_n,
	\end{align}
 Similarly, we know that  $L_n^b$ is a
 projection in $\cM$ and 
	\begin{align}
 	\label{bLnbineq}
		b L_n^b
	 	=L_n^b b \le 2^{-n}  \varepsilon L_n^b.
	\end{align}
	Note that 
	\begin{align*}
		{\bf 1}-L_n^b &\nonumber  =({\bf 1}-r_{n-1}^b)
		+r_{n-1}^b-L_n^b 	\stackrel{\eqref{eq:Pn}}{=} 
		({\bf 1}-r_{n-1}^b)
		+r_{n-1}^b-
		r_{n-1}^b e^b[0, 2^{-n}\varepsilon ] 
		\\ & 
		=({\bf 1}-r_{n-1}^b)
		+r_{n-1}^b e^b(2^{-n} \varepsilon,\infty).
	\end{align*} 
	By the fact that $0\leq b\in S_0(\mathcal M, \tau)$, 
	 we deduce that $\tau(r_{n-1}^b e^b(2^{-n} \varepsilon,\infty)) \leq  
	 \tau(e^b(2^{-n} \varepsilon,\infty) )<\infty$.  It follows from 
     $ \tau({\bf 1}-r_{n-1}^b)\stackrel{\eqref{rn1bb=brn1b}}{<}\infty$   that 
	$\tau({\bf 1}-L_n^b )<\infty$.   
	Since $\cM$ is properly infinite, it follows from  Lemma~\ref{lem:cofinite-dominates} that 
	$P_n\preccurlyeq L_n^b$. 
		Since $0\leq a\in S_0(\mathcal M, \tau)$, it follows that 
	$\tau(P_n)
	\stackrel{\eqref{eq:Pn}}{\leq }
	 \tau ( e^a(2^{-n},\infty)) <\infty$.  Lemma~\ref{lem:selection} yields 
	\begin{equation}\label{eq:Rn}
		\mbox{a projection
			$R_n\leq L_n^b 	\stackrel{\eqref{eq:Pn}}{\leq } r_{n-1}^b$  such that~}	R_n\sim P_n  ~ \text{and}~ R_nb=bR_n.
	\end{equation}
In particular, 
\begin{align}\label{(bRn)etaaPn}
	bR_n
 \stackrel{\eqref{eq:Rn}}{=}
b  (R_n L_n^b R_n)
	\stackrel{ \eqref{eq:Rn}}
	{=} 
R_n (b    L_n^b) R_n \stackrel{\eqref{bLnbineq}}
	{\leq} 
	2^{-n}  \varepsilon  \cdot  R_n  L_n^b R_n
	\stackrel{\eqref{eq:Rn}}{=}
	 2^{-n}  \varepsilon  R_n 
.  
\end{align}
	
	We next interchange the roles of $a$ and $b$ in the preceding matching 
	procedure. 
	Define
	\begin{align}\label{eq:Qn}
		Q_n:=\big(r_{n-1}^b-R_n
		\big) 
		e^b( 2^{-n},\infty), \quad 
		 L_n^a:=
		\big(
		r_{n-1}^a-P_n
		\big) e^a[0, 2^{-n} \varepsilon]. 
	\end{align}
	The same argument shows that  $Q_n$ and $L_n^a$ are
	projections in $\cM$, 
	$\tau(Q_n)<\infty$, $\tau({\bf 1}-L_n^a)<\infty$, \begin{align}\label{bQngeq2{-n}Q_n}
		bQ_n=Q_nb\geq  2^{-n} Q_n, \quad 
        aL_n^a=L_n^a a \leq 2^{-n} \varepsilon L_n^a.
	\end{align} Lemmas~\ref{lem:cofinite-dominates}  and \ref{lem:selection} therefore yield 	
	\begin{equation}\label{eq:Sn}
		\mbox{a projection $S_n\leq L_n^a 
			\stackrel{\eqref{eq:Qn}}{\leq}
				r_{n-1}^a-P_n$ with\,}	 S_n\sim Q_n  \,\text{and}\,  S_na=aS_n \stackrel{\eqref{bQngeq2{-n}Q_n}}{\leq} 	 2^{-n} \varepsilon  S_n.
	\end{equation}

	Finally, set 
	\begin{align}\label{eq:residuals}
		r_n^a & \nonumber :=
		r_{n-1}^a-P_n-S_n
		\stackrel{\eqref{eq:Pn}}{=}
			r_{n-1}^a 
	 e^a[0, 2^{-n}] 
		-S_n 
		 ={\bf 1}-\sum_{k=1}^{n}(P_k+S_k),\qquad
		\\ r_n^b &:=
		r_{n-1}^b-Q_n-R_n
		={\bf 1}-\sum_{k=1}^{n}(Q_k+R_k).
	\end{align}
	Then,   since $a$ commutes with $r_{n-1}^a, P_n , S_n$ (see \eqref{rn1bb=brn1b}, \eqref{aPncommute}, \eqref{eq:Sn}), it follows that 
	\begin{align*}
		r_n^a a = 
		a r_n^a  .
	\end{align*} 
    By  \eqref{rn1bb=brn1b}, \eqref{eq:Pn} and  \eqref{bQngeq2{-n}Q_n}, 
	$\tau({\bf 1}-  r_n^a) =\tau({\bf 1}-r_{n-1}^a)+\tau(P_n)+\tau(S_n) \stackrel{}{<} \infty$.  
	Similarly, we obtain that  
	\begin{align*}
		r_n^b b= b r_n^b  \,  
		\text{ and }  \, \tau({\bf 1} -r_n^b)<\infty . 
	\end{align*} 
	
		By construction, for every $n\geq1$, 
		\begin{align}\label{P_nS_n=0}
				P_nS_n
		\stackrel{\eqref{eq:Sn}}{=}
		P_n
		\big( 
		(r_{n-1}^a-P_n)S_n
		\big)
		\stackrel{\eqref{eq:Pn}}{=}
		(P_n-P_n)S_n=0,
			\end{align}  and similarly,  $R_nQ_n=0$. 
		Moreover, 
			each of the sequences
	$(P_n)_{n\geq1}$, $(S_n)_{n\geq1}$,
	$(R_n)_{n\geq1}$, and $(Q_n)_{n\geq1}$
	consists of mutually orthogonal projections in $\cM$. 
	Indeed, let $m>k$.  
	Since 
	$(r_n^a)_{n\geq0}$  is a non-increasing sequence,  
	it follows that 
\begin{align}\label{Pmleqrm-1a}
	P_m,\, S_m
	\stackrel{\eqref{eq:Pn}, \eqref{eq:Sn}}{\leq} 
	r_{m-1}^a\leq r_k^a \stackrel{ \eqref{eq:residuals}}{=} r_{k-1}^a-P_k-S_k
\end{align}
	and so,  \begin{align*}
		P_kP_m
	& \,\,\,\,\, \stackrel{ \eqref{Pmleqrm-1a}}{=} 
	P_k   
	\big( (  r_{k-1}^a-P_k-S_k ) P_m 
    \big)=
(	P_k   
	  r_{k-1}^a- P_k-P_k S_k) P_m \\ & 
\stackrel{\eqref{P_nS_n=0}, \eqref{Pmleqrm-1a}}{=} 
(	P_k   
	 - P_k) P_m=
0	.
\end{align*}
	Similarly, we obtain  $S_k S_m=R_k R_m=Q_k Q_m=0$. 
By orthogonality,  $r_n^a$ and $r_n^b$ are projections in $\cM$.  
	This completes the induction.

	Define  projections 
	\begin{align} \label{rinftyarinftyb}
		r_\infty^a &\nonumber :={\bf 1}-\sum_{n\ge1}(P_n+S_n) \leq 
		{\bf 1}-\sum_{k=1}^{n} (P_k+S_k)
		\stackrel{ \eqref{eq:residuals}}{=}
		r_n^a, \quad \forall n\geq 1 \qquad
		\\ r_\infty^b &  :={\bf 1}-\sum_{n\ge1}(R_n+Q_n)
		\stackrel{\eqref{eq:residuals}}{\leq} r_n^b, \quad  \forall n\geq 1 , 
	\end{align}
	where the sums converge in the strong operator topology.  	By \eqref{aPncommute} and \eqref{eq:Sn}, we know that 
   $r_\infty^a a=a r_\infty^a$.
	Since $ 2^{-n}\downarrow0$,  it follows that   $$r_\infty^a
	\stackrel{\eqref{rinftyarinftyb}}{\leq}
	\bigwedge_{n\geq1} r_n^a
	\stackrel{\eqref{eq:residuals}}{\leq}
	\bigwedge_{n\geq1}e^a[0, 2^{-n}]
	=e^a\{0\}, $$ and thus,   
	\begin{equation}\label{eq:zero-remainders}
		0\leq a  r_\infty^a=a^{\frac{1}{2}} r_\infty^a a^{\frac{1}{2}} \leq    a^{\frac{1}{2}} e^a\{0\} a^{\frac{1}{2}}=a e^a\{0\} =0. 
	\end{equation} 
	Similarly, we have $r_\infty^b b=b r_\infty^b$, $ r_\infty^b\leq e^b\{0\}$, and so, 
	\begin{equation}\label{eq:bzero-remainders}
	br_\infty^b=0. 
	\end{equation} 
	
	Since for each fixed $n\geq 1$, $
	R_n
	\stackrel{\eqref{eq:Rn}}{\sim} P_n $ and $
	S_n \stackrel{\eqref{eq:Sn}}{\sim}  Q_n $, it follows that 
	there exist partial isometries $v_n,w_n\in\mathcal M$ satisfying
	\begin{align}\label{vnwnRPQS}
		v_n^*v_n=R_n,\quad v_nv_n^*=P_n,\quad
		w_n^*w_n=Q_n,\quad w_nw_n^*=S_n.
	\end{align}	
	A straightforward verification shows   that
	\begin{equation*}
		v:=\sum_{n\geq1}(v_n+w_n)\qquad \text{(s.o.t)}
	\end{equation*}
	is a partial isometry in $\mathcal M$ with   
	\begin{align}\label{vdefPQ}
		Q:= v^*v=\sum_{n\geq1}(R_n+Q_n),\qquad
		P:= vv^*=\sum_{n\geq1}(P_n+S_n).
	\end{align}
	In particular, for each $n\geq 1$, 
	\begin{align}\label{(vbQv^*)P_k}
		(vbQv^*)P_n
		&\nonumber \stackrel{\eqref{vnwnRPQS}}{=}
		vbQv_n^*
		\stackrel{\eqref{vnwnRPQS}}{=}
		vbQR_nv_n^*
		\stackrel{\eqref{vdefPQ}}{=} vbR_nv_n^*
		\stackrel{\eqref{eq:Rn}}{=}
		v(R_nbR_n)v_n^*
		\\
		& \nonumber  \stackrel{\eqref{vnwnRPQS}}{=}
		v_nbR_nv_n^*
		\stackrel{\eqref{vnwnRPQS}}{=}
		v_nbR_nv^*
		\stackrel{\eqref{vdefPQ}}{=} 
		v_nbR_nQv^*
		\stackrel{\eqref{eq:Rn}}{=}
		 v_n R_nb Qv^*
		\\
		&  \stackrel{\eqref{vnwnRPQS}}{=}
		v_nbQv^*
		\stackrel{\eqref{vnwnRPQS}}{=}
		P_n(vbQv^*)
	\end{align}
	and 
	\begin{align}\label{S_k(vbQv^*)}
		S_n(vbQv^*)= (vbQv^*)  S_n=w_nbQ_nw_n^*. 
	\end{align}

	Define 
	\begin{align}\label{HK}
		0\leq H& \nonumber:=\sum_{n\geq1}
		\bigl(aP_n+w_nbQ_nw_n^*\bigr),\\
		0\leq	K&:=\sum_{n\geq1}
		\bigl(v_nbR_nv_n^*+aS_n\bigr).
	\end{align}
By the fact  that
\begin{align}\label{v_n(bR_n)v_n^* eta a P_n}
	v_n(bR_n)v_n^* 
 \stackrel{\eqref{(bRn)etaaPn}}{\leq}
	 2^{-n} \varepsilon  \cdot   v_n R_n v_n^*
  \stackrel{ \eqref{vnwnRPQS}}
	{  =}
	 2^{-n} \varepsilon P_n  
	\stackrel{ \eqref{aPncommute}}
	{  \leq } 
	\varepsilon a P_n 
\end{align}	
and 
	\begin{align}\label{w_n(bQ_n)w_n^*geq}
	aS_n
	\stackrel{\eqref{eq:Sn}}{\leq}
	 2^{-n} \varepsilon S_n
	\stackrel{ \eqref{vnwnRPQS}}
	{  =}
	2^{-n}  \varepsilon \cdot  w_nQ_nw_n^*   
	\stackrel{\eqref{bQngeq2{-n}Q_n}}{\leq} 
		\varepsilon
		 w_n(bQ_n)w_n^*, 
\end{align}	
we deduce that 
	\begin{align}\label{HvarepsilonK}
		 K 	\stackrel{ \eqref{HK}}
		 {  =}
		 \sum_{n\geq1}
		\bigl(v_nbR_nv_n^*+aS_n\bigr)
			\stackrel{ \eqref {v_n(bR_n)v_n^* eta a P_n}, \eqref{w_n(bQ_n)w_n^*geq}
		}
		{  \leq}
		\sum_{n\geq1}
		\bigl(\varepsilon aP_n+\varepsilon w_nbQ_nw_n^* \bigr)
	\stackrel{ \eqref{HK}}
		{  =}\varepsilon H.
	\end{align}

	Since for each $k\geq 1$, 
	\begin{align}\label{P_k	(aP-vbQv^*)geq0}
	P_k	(aP-vbQv^*)  &\nonumber \stackrel{\eqref{(vbQv^*)P_k}}{=}  (aP-vbQv^*) P_k  
 \stackrel{\eqref{(vbQv^*)P_k}}{=}
		aP_k
		-  v_k bR_k v_k^* 
		\stackrel{ \eqref {v_n(bR_n)v_n^* eta a P_n}
		}
		{  \geq} 0,  \\ 
		S_k (aP-vbQv^*) & 
		\stackrel{\eqref{S_k(vbQv^*)}}{=}
		(aP-vbQv^*) S_k
	\stackrel{\eqref{S_k(vbQv^*)}}{=}
	aS_k-  w_k bQ_kw_k^* \stackrel{ \eqref {w_n(bQ_n)w_n^*geq}
		}
		{  \leq}
		0,
	\end{align} 
	and 
	\begin{align}\label{(aP-vbQv*)(1-P)=0}
			(aP-vbQv^*) 	({\bf 1}-P)=aP-vbQv^*-
			aP+vbQv^*P
			\stackrel{\eqref{vdefPQ}}{=}
			0, 
	\end{align} 
	it follows from the  orthogonality of projections  that  
	\begin{align}\label{eq:abs-delta}
		|aP-vbQv^*| 	& \nonumber  \,\,\,\,\, \stackrel{\eqref{(aP-vbQv*)(1-P)=0}}{=}
			|(aP-vbQv^*)P|
		\\ & \nonumber  \stackrel{\eqref{vdefPQ}, \eqref{P_k	(aP-vbQv^*)geq0}}{=}		\sum_{n\geq1}  
		(aP-vbQv^*) P_n
		- \sum_{n\geq1}   
		(aP-vbQv^*) S_n
		\\ & \nonumber  \,\,\,\,\, 
		\stackrel{ \eqref{P_k	(aP-vbQv^*)geq0}}{=}
		\sum_{n\geq1}  \big(  aP_n
		-  v_n bR_n v_n^*  \big) +
		\sum_{n\geq1}  
		\big(  w_n bQ_n w_n^* -
		aS_n 
		\big)
		\\ &\nonumber \,\,\,\,\,\,\,
		=\sum_{n\geq1} \big(  aP_n
		+w_n bQ_n w_n^*  \big) -
		\sum_{n\geq1}  \big( aS_n  +  v_n bR_n v_n^* 
		\big)
		\\ &\,\,\,\,\,\, \stackrel{ \eqref{HK}}
		{  =} H-K.
	\end{align}
	
		
	By the fact that 
$a({\bf 1}- P)\stackrel{\eqref{rinftyarinftyb}, \eqref{vdefPQ}}{=}
ar_\infty^a 
\stackrel{\eqref{eq:zero-remainders}}{=}
0$ and $b({\bf 1}- Q)\stackrel{\eqref{rinftyarinftyb}, \eqref{eq:bzero-remainders},  \eqref{vdefPQ}}{=}0$, 
we deduce that 
$a=aP$ and $b=bQ$. 
Since    
\begin{align}\label{av-vb=(aP-vbQv^*)v}
	av-vb=(aP)v-v(bQ)
	\stackrel{\eqref{vdefPQ}}{=} (aP-vbQv^*)v, 
\end{align}	
it follows 
	that  \begin{align}\label{mu(av-vb)=mu(H-K)}
		\mu(av-vb)
		&\nonumber \stackrel{\eqref{av-vb=(aP-vbQv^*)v}}{=}
		\mu((aP-vbQv^*)v)
		=
		\mu(aP-vbQv^*)
		\\ & 
		\,\, = \mu(|aP-vbQv^*|)
		\stackrel{\eqref{eq:abs-delta}}{=}\mu(H-K).
	\end{align}

	On the other hand, 
	\begin{align}\label{eq:equimeasurabledecom}
		\mu(H\oplus K) 
		&\nonumber  \stackrel{\eqref{HK}}{=}\mu\left( 
		\oplus_{n\geq 1}  \left( aP_n\oplus w_nbQ_nw_n^* 
		\oplus 
		v_nbR_nv_n^*
		\oplus aS_n
		\right)
		\right)\\ & 
		\,\, =\mu\left( 
		\oplus_{n\geq 1}  \left( \mu(aP_n)  \oplus  \mu (w_nbQ_nw_n^*) 
		\oplus
		\mu
		( v_nbR_nv_n^*)
		\oplus \mu (aS_n)
		\right)
		\right)
		.
	\end{align}
	Since  $\mu( v_nbR_nv_n^*)\stackrel{\text{\cite[Prop.3.2.7]{DPS}}}{\leq} \mu(bR_n )$
	and $v_n^*(v_nbR_nv_n^*)v_n \stackrel{\eqref{vnwnRPQS}}{=} R_n b R_n\stackrel{\eqref{eq:Rn}}{=}bR_n$, 
	it follows that 
	$$\mu( v_nbR_nv_n^*)\stackrel{\text{\cite[Prop.3.2.7]{DPS}}}{\geq} \mu(bR_n ), \quad 
	\text{ and so},  \quad  
	\mu( v_nbR_nv_n^*)=\mu(bR_n ).$$ 
	Similarly, we obtain 
	$\mu( w_nbQ_nw_n^* ) =\mu(bQ_n)$. 
	This means  that 
	\begin{align}\label{eq:equimeasurable}
		\mu(H\oplus K) 
		&\nonumber  \,\,\,\,\,\stackrel{\eqref{eq:equimeasurabledecom}}{=}
		\mu\left( 
		\oplus_{n\geq 1}  \left( \mu(aP_n)  \oplus  \mu (w_nbQ_nw_n^*) 
		\oplus
		\mu
		( v_nbR_nv_n^*)
		\oplus \mu (aS_n)
		\right)
		\right)
		\\ & \nonumber
		\quad\,	=\mu\left( 
		\oplus_{n\geq 1}  \left( \mu(aP_n)  \oplus  \mu (bQ_n) 
		\oplus
		\mu
		(bR_n)
		\oplus \mu (aS_n)
		\right)
		\right)
		\\ & \nonumber 
		\stackrel{\eqref{eq:zero-remainders}, \eqref{eq:bzero-remainders}}{=}
		\mu\big( 
		\oplus_{n\geq 1}  \left( \mu(aP_n)  \oplus  \mu (bQ_n) 
		 \oplus
		\mu
		(bR_n)
		\oplus \mu (aS_n) 
		\right)
		\\ &\nonumber \qquad \,\,\,\,     \oplus 
		\mu(a r^a_{\infty} )
		\oplus 
		\mu(b r^b_{\infty} )
		\big)
		\\ & \nonumber \quad\, =
		\mu\left( 
		\oplus_{n\geq 1}  \left( aP_n \oplus  aS_n  
		\oplus 
		bR_n
		\oplus bQ_n
		\right)
		\oplus a r^a_{\infty}
		\oplus b r^b_{\infty}
		\right)
		\\ &\,\,\,\,\,  \stackrel{\eqref{rinftyarinftyb}}{=}  \mu(a\oplus b). 
	\end{align}
	
	Recall that $ 0\leq K
	\stackrel{ \eqref{HvarepsilonK}}
	{  \leq }
	\varepsilon H$. 
	Therefore,  
	\begin{align}\label{Hleqfrac11-eta(H-K)}
		0\leq H\leq \frac{1}{1-\varepsilon} (H-K).
	\end{align}
	By the symmetry of $E(\mathcal M, \tau)$, 
	we  then obtain 
	\begin{align*}
		\|a\oplus b\|_{ 
			E(\mathcal M\oplus\mathcal M,\tau\oplus\tau)} 
		&\stackrel{\eqref{eq:equimeasurable}}{=}
		\|H\oplus K\|_{ 
			E(\mathcal M\oplus\mathcal M,\tau\oplus\tau)} 
		\\
		&\,\, \leq \|H\oplus0\|_{ 
			E(\mathcal M\oplus\mathcal M,\tau\oplus\tau)} 
		+\|0\oplus K\|_{ 
			E(\mathcal M\oplus\mathcal M,\tau\oplus\tau)} \\
		&\,\, =\|H\|_{E(\mathcal M,\tau)}+\|K\|_{E(\mathcal M,\tau)}\\
		& \stackrel{ \eqref{HvarepsilonK}}
		{  \leq }
		(1+\varepsilon)\|H\|_{E(\mathcal M,\tau)} 
		\stackrel{\eqref{Hleqfrac11-eta(H-K)}}{\leq}
		\frac{1+\varepsilon}{1-\varepsilon}
		\|H-K\|_{E(\mathcal M,\tau)}\\
		&\stackrel{\eqref{mu(av-vb)=mu(H-K)}}{=}
		\frac{1+\varepsilon}{1-\varepsilon}
		\|av-vb\|_{E(\mathcal M,\tau)}
		\leq \frac{1+\varepsilon}{1-\varepsilon} \norm{\delta_{a,b}}_{\cM\to E(\cM,\tau)} \norm{v}_{\infty}
		\\ &\,\,  \leq \frac{1+\varepsilon}{1-\varepsilon} \norm{\delta_{a,b}}_{\cM\to E(\cM,\tau)}.
	\end{align*} 
	Since $0<\varepsilon<1$ is arbitrarily taken, 
	we have $$\norm{\delta_{a,b}}_{\cM\to E(\cM,\tau)}
	\ge \norm{a\oplus b}_{E(\cM\oplus\cM,\tau\oplus \tau)}.$$

	By Corollary \ref{normest} above, we complete the proof. 
\end{proof}

\begin{remark}

Observe that, if $a$ is not positive, then   inequality  \eqref{===}
may be false.
For example, let $\cM =\mathbb{M}_2$ (and $\tau$ be a standard trace on $\mathbb{M}_2$) and let $E(\cM,\tau)=L_p(\cM,\tau)$, where  $1<p<\infty$.
Let $a= \left(
          \begin{array}{cc}
            -\frac12 & 0 \\
            0 & \frac12 \\
          \end{array}
        \right)
$
and
$x= \left(
          \begin{array}{cc}
            0 & 1 \\
            1 & 0 \\
          \end{array}
        \right).
$
We have
$$[a,x]= \left(
          \begin{array}{cc}
            0 & -\frac{1}{2} \\
             \frac{1}{2}& 0 \\
          \end{array}
        \right)-\left(
          \begin{array}{cc}
            0 &  \frac{1}{2} \\
             -\frac{1}{2}& 0 \\
          \end{array}
        \right) = \left(
          \begin{array}{cc}
            0 & -1 \\
           1& 0 \\
          \end{array}
        \right).$$
 Moreover, we have $\mu([a,x])= \chi_{[0,2)}$ and $\norm{[a,x]}_p = 2^{1/p}$.
 However, $\mu(a\oplus a) =\frac12\chi_{[0,4)}$ and $\norm{a\oplus a}_p =  2^{\frac{2}{p} -1}$.
 We have
 $$ \norm{a\oplus a}_p< \norm{[a,x]}_{p} \le \norm{\delta_a}_{(\mathbb{M}_2,\norm{\cdot}_{\mathbb{M}_2})\to (L_p(\mathbb{M}_2,\tau), \norm{\cdot}_p)}.  $$

Therefore, Theorem \ref{thm2}  does not hold for general non-positive operators. 
\end{remark}

 {\bf Acknowledgement:} The authors would like to thank  Aleksey Ber for helpful discussions and comments. 
  
\bibliographystyle{amsalpha}

\end{document}